\newtheorem{theorem}{Theorem}
\newtheorem{corollary}[theorem]{Corollary}
\newtheorem{definition}[theorem]{Definition}
\newtheorem{example}[theorem]{Example}
\newtheorem{lemma}[theorem]{Lemma}
\newtheorem{proposition}[theorem]{Proposition}
\newtheorem{remark}[theorem]{Remark}
\begin{document}
\title[Dickson algebras are atomic at $p$]{Dickson algebras are atomic at $p$%
}
\author{Nondas E. Kechagias}
\address{Department of Mathematics, University of Ioannina,\\
Greece, 45110}
\email{nkechag@uoi.gr}
\urladdr{http://www.math.uoi.gr/\symbol{126}nondas\_k }
\thanks{This paper is in final form and no version of it will be submitted
for publication elsewhere.}
\subjclass[2000]{Primary 13A50; Secondary 55P10}
\keywords{Dickson algebra, Steenrod algebra, atomic objects}

\begin{abstract}
The notion of atomicity defined by Cohen, Moore and Neisendorfer is studied
for the Dickson algebras. Not any ring of invariants respects this property.
It depends on the property of the Dickson algebra that given any monomial $d$
there exists a sequence of Steenrod operations $P\left( \Gamma ,d\right) $
such that $P\left( \Gamma ,d\right) d$ becomes a $p$-th power of the top
Dickson algebra generator.\ \ 
\end{abstract}

\maketitle

\section{Statement of results}

The term atomic was introduced by Cohen, Moore and Neisendorfer in (\cite%
{C-M-N}) to answer the question of whether a given space admits a nontrivial
product decomposition up to homotopy. We consider the analogue question for
the Dickson algebra. Let $V^{n}$ denote an $n$-dimensional vector space over 
$%
\mathbb Z%
/p%
\mathbb Z%
$, then 
\begin{equation*}
H^{\ast }\left( BV^{n};%
\mathbb Z%
/p%
\mathbb Z%
\right) \cong E(x_{1},\cdots ,x_{n})\otimes P[y_{1},\cdots ,y_{n}].
\end{equation*}%
$P[y_{1},\cdots ,y_{n}]^{GL_{n}}=%
\mathbb Z%
/p%
\mathbb Z%
\left[ d_{n,1},\cdots ,d_{n,n-1},d_{n,n}\right] $ denotes the classical
Dickson algebra which is a graded polynomial algebra and 
\begin{equation*}
D_{n}:=\left( E(x_{1},\cdots ,x_{n})\otimes P[y_{1},\cdots ,y_{n}]\right)
^{GL_{n}}
\end{equation*}
the extended Dickson algebra studied by Mui (\cite{Mui}).

In a series of papers, Campbell, Cohen, Peterson and Selick studied self
maps on certain important spaces in topology. In order to prove homotopy
equivalence at $p$, they considered the corresponding $mod-p$ homology
homomorphisms. So they had to use the Dyer-Lashof algebra and (or) quotients
of it as the main ingredient.\ It is well known that the Dyer-Lashof algebra
is closely related with Dickson algebras. Hence, they studied and used
properties of the Dickson algebras, especially papers \cite{C-P-S1} and \cite%
{C-C-P-S1}. The advantage is that the Steenrod algebra action on Dickson
algebras is better understood than the Nishida relations on the Dyer-Lashof
algebra. \ 

Motivated by topological questions regarding the cohomology of an infinite
loop space and strongly influenced by the work of Campbell, Cohen, Peterson
and Selick in \cite{C-P-S1} and \cite{C-C-P-S1} we study the problem under
which conditions is a degree preserving $\mathcal{A}$-endomorphism of $D_{n}$
an isomorphism. Here $\mathcal{A}$ stands for the Steenrod algebra.\ 

\textbf{Theorem.} \ref{Th4} \textit{The extended Dickson algebra }$D_{n}$%
\textit{\ is atomic as a Steenrod algebra module.}\ 

The proof depends on a remarkable property that $D_{n}$ satisfies with
respect to its Steenrod algebra action.

\textbf{Theorem.} \ref{power of d-n,n}\ \textit{Let }$d^{K}=\prod%
\limits_{1}^{n}d_{n,i}^{k\left( i\right) }$\textit{. There exists a sequence
of Steenrod operations }$P\left( \Gamma ,K\right) $\textit{\ such that }%
\begin{equation*}
P\left( \Gamma ,K\right) d^{K}=ud_{n,n}^{p^{m}}\text{.}
\end{equation*}%
\textit{for some natural number }$m$\textit{\ and unit }$u$\textit{.}

In particular,

\textbf{Corollary.} \ref{decomposit}\ $\overline{D_{n}}$\textit{\ is not
directly decomposable as an \ }$A$\textit{-module.\ }

Here $\overline{D_{n}}$\ denotes the augmentation ideal of $D_{n}$ and
corollary implies that the only direct summands are $0$ and $\overline{D_{n}}
$.

Finally we apply Theorem \ref{Th4} to the study of self maps between $%
Q_{0}S^{0}$.

\textbf{Theorem.} \ref{isomorphism in loops} \textit{Let }$%
f:Q_{0}S^{0}\rightarrow Q_{0}S^{0}$\textit{\ be an }$H$\textit{-map which
induces an isomorphism on }$H_{2p-3}(Q_{0}S^{0};%
\Bbb{Z}/\Bbb{Z}p%
)$\textit{. Let }$p>2$\textit{\ and }%
\begin{equation*}
f_{\ast }(Q^{\left( p,1\right) }[1])=uQ^{\left( p,1\right) }[1]+others
\end{equation*}%
\textit{for some }$u\in (%
\Bbb{Z}/\Bbb{Z}p%
)^{\ast }$\textit{. Then }$f_{\ast }$\textit{\ is an isomorphism.}

Last Theorem has been proved by Campbell, Cohen, Peterson and Selick in \cite%
{C-C-P-S1} for $p=2$.\textit{\ }

We recently informed that Pengelley and  Williams have studied similar
properties of the Dickson algebra in \cite{Pengelley}. 

\section{Classical Dickson algebras are atomic at $p$}

The term atomic was defined by Cohen, Moore and Neisendorfer. Let us recall
from \cite{C-P-S1} that a CW complex $X$ is atomic at $p$, if given any map $%
f:X\rightarrow X$\ such that $f$\ induces an isomorphism on $H_{r}(X,%
\Bbb{Z}%
/p%
\Bbb{Z}%
)$, then $f_{(p)}$\ is a homotopy equivalence. Here $r$ is the lowest degree
such that $\overline{H}_{r}(X,%
\Bbb{Z}%
/p%
\Bbb{Z}%
)\neq 0$.

In this section we study the action of the Steenrod algebra on monomials of
the Dickson algebra. We prove that its augmentation ideal is directly
indecomposable. First, we recall definitions and basic properties for the
benefit of the reader. The main result of this section is Theorem \ref{power
of d-n,n}.\ 

The following theorem is well known:

\begin{theorem}
\cite{Dic} The classical Dickson algebra is 
\begin{equation*}
P[y_{1},\cdots ,y_{n}]^{GL_{n}}=%
\mathbb Z%
/p%
\mathbb Z%
\left[ d_{n,1},\cdots ,d_{n,n-1},d_{n,n}\right] \text{.}
\end{equation*}%
It is a polynomial algebra and $\left\vert d_{n,i}\right\vert =2\left(
p^{n}-p^{n-i}\right) $ ($\left\vert d_{n,i}\right\vert =2^{n}-2^{n-i}$ for $%
p=2$).
\end{theorem}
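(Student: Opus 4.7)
The plan is to realise the Dickson invariants as the coefficients of a universal additive polynomial, and then to identify the full invariant ring by a Galois-theoretic comparison. I would introduce
\[
f_n(X) := \prod_{v \in V^n}(X - v) \in P[y_1,\ldots,y_n][X].
\]
Because $V^n$ is an $\mathbb{F}_p$-vector subspace, an induction on $\dim V^n$ (using the elementary identity $\prod_{c \in \mathbb{F}_p}(T - c) = T^p - T$) shows that $f_n$ is a linearised polynomial in $X$: only the powers $X^{p^{n-i}}$ survive, so one may write
\[
f_n(X) = \sum_{i=0}^{n} (-1)^{i}\, d_{n,i}\, X^{p^{n-i}}
\]
with $d_{n,0}=1$. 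This produces classes $d_{n,i} \in P[y]$ of degree $2(p^n - p^{n-i})$, obtained from the $(p^n - p^{n-i})$-th elementary symmetric function in the $v \in V^n$ (halve the degrees for $p=2$, where $|y_j|=1$). Since $GL_n$ permutes $V^n$, each $d_{n,i}$ is $GL_n$-invariant, so $R := \mathbb{F}_p[d_{n,1},\ldots,d_{n,n}] \subseteq P[y]^{GL_n}$.

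Next I would show $R$ is itself polynomial and that $R \subseteq P[y]$ is an integral extension. Each $y_j$ satisfies $f_n(y_j)=0$, a monic relation of degree $p^n$ over $R$, so $P[y]$ is integral over $R$; hence $R$ and $P[y]$ share Krull dimension $n$, which forces algebraic independence of the $d_{n,i}$ (otherwise $\dim R < n$). So $R$ is a graded polynomial ring on the listed generators with the advertised degrees.

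To upgrade $R \subseteq P[y]^{GL_n}$ to an equality, I would pass to fraction fields $K := \mathrm{Frac}(R) \subseteq L := \mathrm{Frac}(P[y])$. The roots $v \in V^n$ of $f_n$ are pairwise distinct in $L$ (the $y_j$ are $\mathbb{F}_p$-linearly independent), so $f_n$ is separable and $L$ is the splitting field of $f_n$ over $K$; thus $L/K$ is Galois. Any $K$-automorphism of $L$ permutes $V^n$ while respecting addition and $\mathbb{F}_p$-scaling, so $\mathrm{Gal}(L/K)$ embeds into $GL_n(\mathbb{F}_p)$; the opposite inclusion is automatic from the $GL_n$-action on $P[y]$ fixing $R$, so $\mathrm{Gal}(L/K) = GL_n$ and $L^{GL_n} = K$. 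Since $R$ is polynomial (hence integrally closed) and $P[y]^{GL_n}$ is integral over $R$ with the same fraction field, the equality $R = P[y]^{GL_n}$ follows.

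The main obstacle is precisely this final identification. In modular invariant theory Molien's formula is unreliable, so the Hilbert-series comparison that would close the argument in characteristic zero has to be replaced by the Galois computation above. That computation relies essentially on the linearised form of $f_n$, which in turn uses that $V^n$ is an $\mathbb{F}_p$-vector subspace and not merely a finite set of parameters.
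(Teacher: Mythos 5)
The paper does not actually prove this statement --- it is quoted from Dickson's 1911 paper with a citation and used as a black box --- so there is no internal argument to compare yours against. Your proof is the standard modern (Wilkerson-style) treatment of Dickson's theorem and is correct: the linearisation of $f_n(X)=\prod_{v\in V^n}(X-v)$, the integrality of $P[y_1,\dots,y_n]$ over $R=\mathbb{F}_p[d_{n,1},\dots,d_{n,n}]$ via the monic relation $f_n(y_j)=0$, the Krull-dimension count forcing algebraic independence of the $d_{n,i}$, the identification $\mathrm{Gal}(L/K)\cong GL_n(\mathbb{F}_p)$, and the closing step using that $R$ is integrally closed with fraction field $K\supseteq P[y]^{GL_n}$ all go through, and the degrees $2(p^n-p^{n-i})$ (halved at $p=2$, where $|y_j|=1$) come out right. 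Two minor points worth tightening: the induction showing $f_n$ is additive needs the scaled identity $\prod_{c\in\mathbb{F}_p}(T-c\alpha)=T^p-\alpha^{p-1}T$ applied with $\alpha=f_{n-1}(y_n)$, not only the case $\alpha=1$ that you quote; and the injectivity of $\mathrm{Gal}(L/K)\to GL_n(\mathbb{F}_p)$ should be justified by noting that a $K$-automorphism of $L$ is determined by its restriction to $V^n$, since $y_1,\dots,y_n\in V^n$ generate $L$ over $K$. Neither affects the validity of the argument.
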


As $n$ varies, we get Dickson algebras of various length. In the last
section, Dickson algebras of different lengths will be considered in
connection with the $mod-p$ cohomology of the base point of $%
QS^{0}=\lim \Omega ^{n}\Sigma ^{n}S^{0}$. \ 

We shall recall some well known results concerning the action of the
Steenrod algebra on Dickson algebra generators.

\begin{proposition}
\label{actionp-th}\cite{Kech1} (Th. 30, p. 169) 
\begin{equation*}
P^{p^{k}}(d_{n,i}^{p^{j}})=\left\{ 
\begin{array}{lll}
d_{n,i+1}^{p^{j}}\text{,} & \text{if }k=n+j-i-1\text{ and }i<n & \text{(1)}
\\ 
-d_{n,i}^{p^{j}}d_{n,1}^{p^{j}}\text{,} & \text{if }k=j+n-1 & \text{(2)} \\ 
0\text{, } & \text{otherwise} & 
\end{array}%
\right. \text{.}
\end{equation*}
\end{proposition}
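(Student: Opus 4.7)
The plan is to reduce to the case $j = 0$ and then compute $P^{p^k}(d_{n,i})$ directly from the defining polynomial of the Dickson invariants.

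For the reduction I would use the Frobenius-type consequence of the Cartan formula: for any element $x$ of a commutative $\mathcal{A}$-algebra over $\mathbb{F}_p$ one has $P^r(x^p) = (P^{r/p} x)^p$ when $p \mid r$ and $P^r(x^p) = 0$ otherwise. This is because Cartan expands $P^r(x^p)$ as a sum indexed by partitions of $r$ into $p$ parts, with multinomial coefficients $p!/(m_1!\cdots m_s!)$ that vanish mod $p$ except when a single part has multiplicity $p$. Iterating $j$ times gives
$$P^{p^k}(d_{n,i}^{p^j}) = (P^{p^{k-j}} d_{n,i})^{p^j}\text{ for } k\ge j, \quad P^{p^k}(d_{n,i}^{p^j}) = 0 \text{ for } k<j,$$
so it suffices to prove the $j=0$ case: $P^{p^{n-i-1}} d_{n,i} = d_{n,i+1}$ for $i<n$, $P^{p^{n-1}} d_{n,i} = -d_{n,i} d_{n,1}$, and all other $P^{p^k} d_{n,i}$ vanish.

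For the $j=0$ computation I would exploit the defining product
$$f_n(X) := \prod_{v\in V^n}(X-v) = \sum_{i=0}^n (-1)^i d_{n,i} X^{p^{n-i}}.$$
Every $v\in V^n$ is an $\mathbb{F}_p$-linear combination of $y_1,\dots,y_n$, so $P^1 v = v^p$ and $P^k v = 0$ for $k>1$. With the total Steenrod operation $P_t = \sum_k P^k t^k$ we get $P_t v = v + tv^p$ and
$$P_t f_n(X) = \prod_{v\in V^n}(X-v-tv^p).$$
Matching the coefficient of $X^{p^{n-i}}$ on both sides expresses $P^{p^k} d_{n,i}$ as a symmetric function in $\{v+tv^p\}$. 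An equivalent and perhaps cleaner route is to use the additive recursion $f_n(X) = f_{n-1}(X)^p - \ell_n^{p-1} f_{n-1}(X)$, where $\ell_n = f_{n-1}(y_n)$, valid because $f_{n-1}$ is $\mathbb{F}_p$-linearized in $X$ (all exponents of $X$ are $p$-th powers); this yields $d_{n,i} = d_{n-1,i}^p + \ell_n^{p-1} d_{n-1,i-1}$ and then one inducts on $n$.

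The main obstacle is the combinatorial bookkeeping required to show that, in the expansion, all terms outside the two distinguished cases cancel modulo $p$ and that the surviving terms match $d_{n,i+1}$ and $-d_{n,i} d_{n,1}$ respectively. A useful sanity check comes from degrees: $|P^{p^k} d_{n,i}| = 2(p^n - p^{n-i}) + 2p^k(p-1)$ equals $|d_{n,i+1}|$ exactly when $k = n-i-1$ and equals $|d_{n,i} d_{n,1}|$ exactly when $k = n-1$. Once one verifies that the only Dickson monomials of the correct degree that can appear in $P^{p^k} d_{n,i}$ are those allowed by the statement, the identification of coefficients reduces to the base case $n=1$: there $d_{1,1} = y_1^{p-1}$ and $P^1(y_1^{p-1}) = (p-1) y_1^{2p-2} = -d_{1,1}^2$, which seeds the induction and confirms the sign in case (2).
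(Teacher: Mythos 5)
The paper offers no proof of this proposition at all---it is quoted from \cite{Kech1} (Th.~30)---so there is no internal argument to compare yours against; I can only judge the proposal on its own terms. The reduction to $j=0$ is correct and complete: the Cartan-formula fact $P^{r}(x^{p})=(P^{r/p}x)^{p}$ for $p\mid r$ and $0$ otherwise iterates to $P^{p^{k}}(d_{n,i}^{p^{j}})=(P^{p^{k-j}}d_{n,i})^{p^{j}}$ for $k\geq j$ and $0$ for $k<j$, and the sign in case (2) survives the $p^{j}$-th power because $(-1)^{p^{j}}=-1$ in $\mathbb{F}_p$ for every prime. The recursion $f_{n}(X)=f_{n-1}(X)^{p}-\ell_{n}^{p-1}f_{n-1}(X)$, the resulting identity $d_{n,i}=d_{n-1,i}^{p}+\ell_{n}^{p-1}d_{n-1,i-1}$, and the base case $P^{1}(y_{1}^{p-1})=-d_{1,1}^{2}$ are all correct.

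The gap is that the $j=0$ computation, which is the entire content of the proposition, is never carried out: you defer it to ``combinatorial bookkeeping'' and propose that degree counting will isolate the two surviving cases. Degree counting does settle $k<n-i-1$, but it cannot establish the ``otherwise $0$'' clause in the range $n-i-1<k<n-1$. Concretely, at $p=2$, $n=4$, $i=2$ the class $Sq^{4}d_{4,2}$ sits in degree $16=|d_{4,1}^{2}|$, so degree alone cannot distinguish the asserted value $0$ from $d_{4,1}^{2}$; similar collisions occur systematically. To close the argument you must actually push $P^{p^{k}}$ through the recursion, and that requires a companion lemma on the Steenrod action on $\ell_{n}^{p-1}$ (equivalently on $L_{n}$), which you have not stated; the multiplicativity of the total operation gives $P_{t}\ell_{n}=\ell_{n}\prod_{w\in V^{n-1}}\bigl(1+t(y_{n}-w)^{p-1}\bigr)$, and extracting the relevant coefficients from this product is precisely the nontrivial bookkeeping. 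Until that (or the equivalent analysis of $\prod_{v}(X-v-tv^{p})$, as in Wilkerson's primer on the Dickson invariants) is done, cases (1), (2) and the vanishing statement are asserted rather than proved.
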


\begin{definition}
Let $c$ and $j$\ be natural numbers such that $j\leq c+1$. Let $P(c,j)$
stand for the Steenrod iterated operation\ \ \ \ 
\begin{equation*}
P\left( c,j\right) =P^{p^{c-j+1}}...P^{p^{c}-j+j}\text{.}
\end{equation*}
\end{definition}

\begin{lemma}
Let $i=1,...,n$\ and $k\left( i\right) $\ a natural number. Let $c=k\left(
i\right) +n-i-1$ for $i<n$\ and $c=k\left( n\right) +n-1$. Then 
\begin{equation*}
P\left( c,j\right) (d_{n,i}^{p^{k\left( i\right) }})=\left\{ 
\begin{array}{ll}
ud_{n,n}^{p^{k\left( i\right) }}\text{,} & \text{if }0<n-i=j\text{ and }i<n
\\ 
ud_{n,n}^{2p^{k\left( i\right) }}\text{,} & \text{if }i=j=n \\ 
d_{n,i+j}^{p^{k\left( i\right) }}\text{, } & i+j<n \\ 
0, & \text{otherwise}%
\end{array}%
\right. \text{.}
\end{equation*}%
Here $u$\ is a unit.
\end{lemma}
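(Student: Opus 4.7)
The plan is to iterate Proposition~\ref{actionp-th} along the decreasing sequence of exponents making up $P(c,j) = P^{p^{c-j+1}}\cdots P^{p^{c}}$, reading operators right-to-left so that $P^{p^{c}}$ acts first.

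For the three ``inductive'' cases ($i<n$ with $i+j\le n$), I would check that each successive operation feeds correctly into Proposition~\ref{actionp-th}(1). The first application uses $k=c=k(i)+n-i-1$ (and the proposition's $j$ replaced by $k(i)$), producing $d_{n,i+1}^{p^{k(i)}}$; the next uses $c-1 = k(i)+n-(i+1)-1$ and produces $d_{n,i+2}^{p^{k(i)}}$; and so on. After $j$ steps one arrives at $d_{n,i+j}^{p^{k(i)}}$, which is case (3) when $i+j<n$ and case (1) (with $u=1$) when $i+j=n$.

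For $i=j=n$ the first application falls instead under Proposition~\ref{actionp-th}(2), since $c=k(n)+n-1$ is exactly the $j+n-1$ appearing there, giving $P^{p^{c}}(d_{n,n}^{p^{k(n)}})=-d_{n,n}^{p^{k(n)}}d_{n,1}^{p^{k(n)}}$. I would then show inductively that the remaining $n-1$ operations act on this product via Cartan, passing trivially through the $d_{n,n}^{p^{k(n)}}$ factor while promoting the second factor from $d_{n,\ell}^{p^{k(n)}}$ to $d_{n,\ell+1}^{p^{k(n)}}$ exactly as in the previous paragraph. After all $n$ operations this yields $-d_{n,n}^{p^{k(n)}}d_{n,n}^{p^{k(n)}}=-d_{n,n}^{2p^{k(n)}}$, i.e.\ case (2) with $u=-1$.

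The hard part will be verifying that the Cartan cross-terms actually vanish; concretely, one must show $P^{a}(d_{n,n}^{p^{k(n)}})=0$ for $0<a\le p^{c-\ell}$ with $\ell\ge 1$. My approach is to use the Frobenius identity $P^{a}(x^{p^{k(n)}})=(P^{a/p^{k(n)}}x)^{p^{k(n)}}$ when $p^{k(n)}\mid a$, and $0$ otherwise, a consequence of iterated Cartan together with the vanishing mod $p$ of symmetric orbits of length $p$ on the monomials $P^{a_{1}}x\cdots P^{a_{p}}x$. This reduces the claim to $P^{a'}(d_{n,n})=0$ for $0<a'<p^{n-1}$, which I would establish on dimension grounds: $P^{a'}(d_{n,n})$ is $GL_{n}$-invariant and hence a polynomial in $d_{n,1},\dots,d_{n,n}$ of total degree $2(p^{n}-1)+2a'(p-1)$, but a base-$p$ analysis of $\sum k_{i}(p^{n-i}+\cdots+p^{n-1})=(1+p+\cdots+p^{n-1})+a'$ shows that no non-negative integer solution has $0<a'<p^{n-1}$ (the trivial solution $k_{n}=1$ gives $a'=0$, and any other solution overshoots by at least $p^{n-1}$). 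Finally, for the ``otherwise'' branch with $i<n$ and $i+j>n$, the iteration reaches $d_{n,n}^{p^{k(i)}}$ after $n-i$ steps and the next exponent $p^{k(i)-1}$ is not divisible by $p^{k(i)}$, so the same Frobenius argument kills the expression and everything thereafter.
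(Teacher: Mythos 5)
Your overall route is the same as the paper's: iterate Proposition \ref{actionp-th}, with case (1) promoting the index one step at a time and case (2) entering only when $i=n$, after which the remaining operations act through the Cartan formula on the factor $d_{n,1}^{p^{k(n)}}$. The paper's own proof is only a two-line remark to this effect; you correctly identify the point it glosses over, namely that the Cartan cross-terms $P^{a}(d_{n,n}^{p^{k(n)}})$ with $0<a\le p^{c-\ell}$ must vanish, and your Frobenius reduction of this to $P^{a'}(d_{n,n})=0$ for $0<a'<p^{n-1}$ is fine.

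However, your proof of that key vanishing is wrong. The degree equation $\sum k_{i}\left(p^{n-i}+\cdots+p^{n-1}\right)=\left(1+p+\cdots+p^{n-1}\right)+a'$ \emph{does} admit nonnegative solutions with $0<a'<p^{n-1}$, and even with $0<a'\le p^{n-2}$, which is the range you actually need. For $p=2$, $n=3$, $a'=1$ one has $|Sq^{1}d_{3,3}|=8=|d_{3,1}^{2}|$ (i.e.\ $k_{1}=2$ solves $4k_{1}+6k_{2}+7k_{3}=7+1$); for $p=3$, $n=2$, $a'=2$ one has $k_{1}=2$ solving $3k_{1}+4k_{2}=4+2$. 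So degree counting alone cannot rule out, say, $Sq^{1}d_{3,3}=d_{3,1}^{2}$, and the claim that ``any other solution overshoots by at least $p^{n-1}$'' is false. The vanishing itself is true, but you need an extra input: since $d_{n,n}=L_{n}^{p-1}$ is a power of a product of linear forms, the total power operation gives $P(d_{n,n})=d_{n,n}\cdot\prod_{\ell}(1+\ell^{p-1})^{p-1}$, so $P^{a'}(d_{n,n})$ is $d_{n,n}$ times a $GL_{n}$-invariant of degree $2a'(p-1)<2(p^{n}-p^{n-1})=|d_{n,1}|$, and only \emph{that} cofactor dies for degree reasons. (Alternatively, cite the known complete formulas for the Steenrod action on Dickson generators.) With that repair, and noting that the ``otherwise'' branch as you treat it only covers $i<n$, $i+j>n$ (the lemma's stated ``otherwise'' for $i=n$, $j<n$ is in fact not zero, but neither the paper's proof nor its later applications use that case), your argument goes through.
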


\begin{proof}
This is a repeated application of last proposition. If $k\left( i\right)
+n-1=k\left( n\right) +n-1$, then $k\left( i\right) =k\left( n\right) $ and $%
k\left( i\right) +n-i-1<k\left( n\right) +n-1$ for $1\leq i<n$. Hence case
(2) of last proposition applies only for $i=n$.
\end{proof}

\begin{lemma}
\label{SteenActionLemma3}Let $i=1,...,n$\ and $k\left( i\right) $\ natural
numbers such that $c=k\left( i\right) +n-i-1$ \ and $c=k\left( n\right) +n-1$%
. Let $m\left( i\right) =a_{i}p^{k\left( i\right) }$ with $0\leq a_{i}\leq
p-1$\ and $j=\max \left\{ n,n-i\ |\ a_{n},a_{i}\neq 0\right\} $. Let $%
d=\prod\limits_{1}^{n}d_{n,i}^{m\left( i\right) }$. Then 
\begin{equation*}
P\left( c,j\right) (d)=\left\{ 
\begin{array}{ll}
ud_{n,n}^{p^{k\left( n\right) }}d\text{,} & \text{if }j=n \\ 
ud_{n,n}^{p^{k\left( n-j\right) }}dd_{n,n-j}^{-p^{k\left( n-j\right) }}\text{%
,} & \text{if }j<n%
\end{array}%
\right. \text{.}
\end{equation*}%
Here $u$\ is a unit. Moreover, $P\left( c,t\right) (d)=0$,\ if $j<t$.\ 
\end{lemma}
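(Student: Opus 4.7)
The plan is to peel off one Steenrod operation at a time from $P(c,j)=P^{p^{c-j+1}}\cdots P^{p^{c}}$ via the Cartan formula and use Proposition~\ref{actionp-th} to show that after each step only one ``branch'' of the Cartan expansion survives. First I would compute $P^{p^c}(d)$. Each factor $d_{n,i}^{m(i)}=(d_{n,i}^{p^{k(i)}})^{a_i}$ admits per-slot shifts $\{0,\,p^{n+k(i)-i-1},\,p^{k(i)+n-1}\}$ by Proposition~\ref{actionp-th}, and our hypothesis makes these $\{0,p^c,p^{c+i}\}$ for $i<n$ and $\{0,p^c\}$ for $i=n$. Since $p^{c+i}>p^{c}$ for $i\geq 1$, exactly one slot in exactly one factor absorbs the entire shift $p^c$; Cartan multiplicities collapse to $a_i$ since $0\leq a_i\leq p-1$. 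Hence (the last summand present only if $a_n\neq 0$)
\[
P^{p^c}(d)=\sum_{i<n,\,a_i\neq 0}a_i\,d_{n,i+1}^{p^{k(i)}}d_{n,i}^{-p^{k(i)}}\,d\;-\;a_n\,d_{n,1}^{p^{k(n)}}\,d.
\]

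Second, by induction on $1\leq t\leq j$ I would show that in each branch exactly one ``migrating'' factor $d_{n,\ell}^{p^{k'}}$ climbs the Dickson chain by one index at each subsequent step via case~(1) of Proposition~\ref{actionp-th}, while all other factors are inert. The crucial point is that the minimum nonzero Steenrod shift on any non-migrating factor of the current monomial is always $\geq p^c$, whereas $P^{p^{c-t+1}}$ for $t\geq 2$ has total shift strictly less than $p^c$; by Cartan every non-migrating factor must therefore absorb shift $0$, and only case~(1) on the migrating factor can fire. Once a branch's migrating factor reaches $d_{n,n}$ the only remaining nonzero shift there is the case~(2) value $p^{k'+n-1}$, which far exceeds the ambient $p^{c-\tau}$; hence that branch vanishes at the next step. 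So the $i^{*}=n$ branch survives exactly $n$ steps (case~(2) at step~$1$, case~(1) the remaining $n-1$ times), while the $i^{*}<n$ branches survive exactly $n-i^{*}$ steps.

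The main obstacle is the Cartan bookkeeping: a priori each $P^{p^{c-t+1}}$ could distribute its shift across several factors. This is tamed precisely by the uniform hypothesis $c=k(i)+n-i-1$, which makes every admissible nonzero shift on a non-migrating factor lie in $\{p^c,p^{c+i},\ldots\}$; nothing smaller is available, so no nontrivial distribution is possible. Putting this together, the unique branch surviving all $j$ operations is $i^{*}=n$ when $j=n$, giving $-a_n\,d_{n,n}^{p^{k(n)}}\,d$, and $i^{*}=n-j$ when $j<n$, giving $a_{n-j}\,d_{n,n}^{p^{k(n-j)}}\,d_{n,n-j}^{-p^{k(n-j)}}\,d$; the units are $u=-a_n$ and $u=a_{n-j}$ respectively. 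Finally, for the ``moreover'' claim: applying $P^{p^{c-j}}$ to the surviving monomial, the landed $d_{n,n}^{p^{k'}}$ has only the single nonzero shift $p^{k'+n-1}>p^{c-j}$ and every other factor has minimum nonzero shift $\geq p^c>p^{c-j}$, so Cartan forces $P(c,j+1)(d)=0$, and consequently $P(c,t)(d)=0$ for every $t>j$.
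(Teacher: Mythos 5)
Your proposal is correct and follows essentially the same route as the paper: expand $P^{p^{c}}(d)$ by the Cartan formula using Proposition \ref{actionp-th}, observe that the hypothesis $c=k(i)+n-i-1$, $c=k(n)+n-1$ forces every nonzero Steenrod shift on a non-migrating factor to be at least $p^{c}$ so that each subsequent $P^{p^{c-t}}$ can only advance a single migrating factor up the Dickson chain, and note that a branch dies one step after its migrating factor reaches $d_{n,n}$, which also yields the vanishing of $P(c,t)(d)$ for $t>j$. Your write-up simply makes the Cartan bookkeeping (which the paper delegates to the preceding unnamed lemma) more explicit.
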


\begin{proof}
The proof depends on the Cartan formula and last lemma. Two cases shall be
considered: $j=n$\ and $j<n$.

First case: $j=n$. 
\begin{equation*}
P^{p^{c}}d=u_{n}d_{n,1}^{p^{k\left( n\right)
}}d+\sum\limits_{1}^{n-1}u_{i}d_{n,i+1}^{p^{k\left( i\right)
}}dd_{n,i}^{-p^{k\left( i\right) }}\text{.}
\end{equation*}%
Here $u_{i}=0$, if $a_{i}=0$. Since $k\left( n\right) <k\left( 1\right)
<...<k\left( n-1\right) $, we have 
\begin{equation*}
P^{p^{c-1}}\left( d_{n,1}^{p^{k\left( n\right) }}d\right) =u_{n}^{\prime
}d_{n,2}^{p^{k\left( n\right) }}d\text{.}
\end{equation*}%
And 
\begin{equation*}
P\left( c-1,n-1\right) \left( d_{n,1}^{p^{k\left( n\right) }}d\right)
=u^{\prime }d_{n,n}^{p^{k\left( n\right) }}d\text{.}
\end{equation*}%
For the rest of the terms we have 
\begin{equation*}
P^{p^{c-1}}\left( d_{n,i+1}^{p^{k\left( i\right) }}dd_{n,i}^{-p^{k\left(
i\right) }}\right) =u_{i}^{\prime }d_{n,i+2}^{p^{k\left( i\right)
}}dd_{n,i}^{-p^{k\left( i\right) }}\text{.}
\end{equation*}%
Since $n>i$, last lemma implies 
\begin{equation*}
P\left( c-1,n-1\right) \left( d_{n,i+1}^{p^{k\left( i\right)
}}dd_{n,i}^{-p^{k\left( i\right) }}\right) =0\text{.}
\end{equation*}%
The proof of the case $j<n$ follows a similar pattern.

For the last statement of the lemma, proposition \ref{actionp-th} is
applied: For $j<t\leq n$ and $c-j=k\left( n-j\right) -1<k\left( n-j\right)
+j-1=k\left( i\right) +n-i-1$, we have\ \ 
\begin{equation*}
P^{p^{c-j}}\left( d_{n,n}^{p^{k\left( n-j\right) }}dd_{n,n-j}^{-p^{k\left(
n-j\right) }}\right) =0\text{.}
\end{equation*}
\end{proof}

We are ready to proceed to the main Theorem of this section which is the
building block to construct an algorithm turning a monomial $d$\ to $%
d_{n,n}^{p^{l}}$. Let us firstly demonstrate our method. \ 

\begin{example}
Let $p=2\ $and $n=3$. Instead of $d_{3,1}$, $d_{3,2}$\ and $d_{3,3}$\ we
write $d_{1}$, $d_{2}$\ and $d_{3}$\ respectively. Let $%
K=(k_{1}=2^{2}+2^{3},k_{2}=2^{3}+2^{4},k_{3}=2^{1}+2^{2})$ and 
\begin{equation*}
d^{K}=d_{1}^{2^{2}+2^{3}}d_{2}^{2^{3}+2^{4}}d_{3}^{2+2^{2}}\text{.}
\end{equation*}%
Let $J$ be the sequence consisting of the first terms of those of $K$. 
\begin{equation*}
J=(2^{3},2^{2},2^{1})\text{.}
\end{equation*}%
For each exponent $m_{i}$ in $J$ consider the minimum of $m_{n}+n-1$ and $%
m_{i}+n-i-1$ for $1\leq i\leq n-1$: 
\begin{equation*}
\min \left( J\right) =\left\{ 3+0,2+1,1+2\right\} =3\text{.}
\end{equation*}%
Let $i_{\left( J\right) }$ be the maximum of the following set 
\begin{equation*} 
\left\{ n-i\text{, }n | m_{i}+n-i-1=\min \left(
J\right) \text{ and/or }m_{n}+n-1=\min \left( J\right) \right\} \text{.}
\end{equation*}%
Which is $3$ in this case.

We apply $i_{\left( J\right) }=3$ squaring operations, namely:%
\begin{equation*}
Sq^{2^{\min \left( J\right) }}\text{, }Sq^{2^{\min \left( J\right) -1}}\text{%
, and }Sq^{2^{\min \left( J\right) }-2}\text{.}
\end{equation*}%
\begin{equation*}
Sq^{2^{\min \left( J\right)
}}d^{K}=d_{3}^{2+2^{2}}d_{1}^{2}d_{2}^{2^{3}+2^{4}}d_{1}^{2^{2}+2^{3}}+d_{3}^{2+2^{2}}d_{3}^{2^{3}}d_{2}^{2^{4}}d_{1}^{2^{2}+2^{3}}+d_{3}^{2+2^{2}}d_{2}^{2^{2}+2^{3}+2^{4}}d_{1}^{2^{3}}%
\text{.}
\end{equation*}%
\begin{gather*}
Sq^{2^{\min \left( J\right) -1}}\left[
d_{3}^{2+2^{2}}d_{2}^{2^{3}+2^{4}}d_{1}^{2+2^{2}+2^{3}}+d_{3}^{2+2^{2}+2^{3}}d_{2}^{2^{4}}d_{1}^{2^{2}+2^{3}}+d_{3}^{2+2^{2}}d_{2}^{2^{2}+2^{3}+2^{4}}d_{1}^{2^{3}}%
\right] \\
=d_{3}^{2+2^{2}}d_{2}^{2}d_{2}^{2^{3}+2^{4}}d_{1}^{2^{2}+2^{3}}+d_{3}^{2+2^{2}+2^{2}}d_{2}^{2^{3}+2^{4}}d_{1}^{2^{3}}
\end{gather*}%
\begin{equation*}
Sq^{2^{\min \left( J\right) -2}}\left[
d_{3}^{2+2^{2}}d_{2}^{2+2^{3}+2^{4}}d_{1}^{2^{2}+2^{3}}+d_{3}^{2+2^{2}+2^{2}}d_{2}^{2^{3}+2^{4}}d_{1}^{2^{3}}%
\right] =d_{3}^{2^{3}}d_{2}^{2^{3}+2^{4}}d_{1}^{2^{2}+2^{3}}\text{.}
\end{equation*}%
Finally,%
\begin{equation*}
Sq^{2^{3-2}}Sq^{2^{3-1}}Sq^{2^{3}}d^{K}=d_{3}^{2^{3}}d_{2}^{2^{3}+2^{4}}d_{1}^{2^{2}+2^{3}}%
\text{.}
\end{equation*}

Let $K=(2^{3}+2^{4},2^{2}+2^{3},2^{3})$. Then $K=(2^{3},2^{2},2^{3})$ , $%
\min \left( J\right) =3$, and \ $i_{\left( J\right) }=2$.%
\begin{equation*}
Sq^{2^{2}}Sq^{2^{3}}d^{K}=d_{3}^{2^{2}+2^{3}}d_{2}^{2^{3}+2^{4}}d_{1}^{2^{3}}%
\text{.}
\end{equation*}

Applying the procedure described above five more times we get:\newline
Let $Sq(\Gamma ,K)$\ be the following operation 
\begin{equation*}
Sq(2^{6},3)Sq(2^{4},1)Sq(2^{4},2)Sq(2^{4},3)Sq(2^{3},1)Sq(2^{3},2)Sq(2^{3},3)%
\text{,}
\end{equation*}%
then 
\begin{equation*}
Sq(\Gamma
,K)d_{3}^{2+2^{2}}d_{2}^{2^{3}+2^{4}}d_{1}^{2^{2}+2^{3}}=d_{3}^{2^{6}}\text{.%
}
\end{equation*}
\end{example}

\begin{definition}
\label{Def1}Let $K=(k\left( 1\right) ,...,k\left( n\right) )$ and $%
d^{K}=\prod\limits_{1}^{n}d_{n,i}^{k\left( i\right) }$. For each $k\left(
i\right) $, let $a_{i}p^{m\left( k\left( i\right) \right) }$ be its lowest
non-zero term in its $p$-adic expansion. Here $a_{i}=0$, if $k\left(
i\right) =0$. Let 
\begin{equation*}
J=(a_{1}p^{m\left( k\left( 1\right) \right) },...,a_{n}p^{m\left( k\left(
n\right) \right) })\text{,}
\end{equation*}%
and $\min \left( J\right) :$ 
\begin{equation*}
\min \{m\left( k\left( n\right) \right) +n-1,m\left( k\left( i\right)
\right) +n-i-1\;|1\leq i<n\text{ and \ }a_{n},a_{i}\neq 0\}\text{.}
\end{equation*}%
Let $i_{\left( J\right) }$ stand for the maximum of the $n-i$'s and/or $n$
such that $m\left( k\left( i\right) \right) +n-i-1=\min \left( J\right) $
and/or $m\left( k\left( n\right) \right) +n-1=\min \left( J\right) $.
\end{definition}

\begin{theorem}
\label{Theorem Steenr-Milnor}Let $d^{K}$, $\min \left( J\right) $\ and $%
i_{\left( J\right) }$\ as in the definition above, then 
\begin{equation*}
P\left( \min \left( J\right) ,i_{\left( J\right) }\right) (d^{K})=\left\{ 
\begin{array}{ll}
ud_{n,n}^{p^{m\left( k\left( n\right) \right) }}d^{K}\text{,} & \text{if }%
i_{\left( J\right) }=n \\ 
ud_{n,n}^{p^{m\left( k\left( n-i_{\left( J\right) }\right) \right)
}}d^{K}d_{n,n-i_{\left( J\right) }}^{-p^{m\left( k\left( n-i_{\left(
J\right) }\right) \right) }}\text{,} & \text{if }i_{\left( J\right) }<n%
\end{array}%
\right. \text{.}
\end{equation*}%
Here\ $u$\ is a unit.
\end{theorem}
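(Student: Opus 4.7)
The plan is to extend the argument of Lemma \ref{SteenActionLemma3} to accommodate arbitrary exponents $k(i)$, by isolating the lowest $p$-adic digit $a_i p^{m(k(i))}$ of each $k(i)$ and handling the remaining higher digits as inert $p$-th power factors. Writing $k(i) = a_i p^{m(k(i))} + p^{m(k(i))+1} l_i$, I would split $d^K = D \cdot E$ where
\begin{equation*}
D = \prod_i d_{n,i}^{a_i p^{m(k(i))}}, \qquad E = \prod_i \bigl(d_{n,i}^{l_i}\bigr)^{p^{m(k(i))+1}}.
\end{equation*}

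The central step is to show $P^b(E) = 0$ for every $0 < b \le p^{\min(J)}$, so that iterated Cartan leaves the $E$-factor untouched at every stage of $P(\min(J), i_{(J)})$. Using Cartan together with the identity $P^j(y^{p^t}) = (P^{j/p^t} y)^{p^t}$ when $p^t \mid j$ and $0$ otherwise, any surviving contribution to $P^b(E)$ requires a distribution $b = \sum b_i$ with each nonzero $b_i$ a multiple of $p^{m(k(i))+1}$ and a nonzero factor $P^{b_i/p^{m(k(i))+1}}(d_{n,i}^{l_i})$. The bound $b \le p^{\min(J)}$ combined with the definition of $\min(J)$ forces $b_i / p^{m(k(i))+1} < p^{n-i-1}$ for $i < n$ (respectively $< p^{n-1}$ for $i = n$); Proposition \ref{actionp-th} together with a degree count shows that the smallest $c > 0$ with $P^c(d_{n,i}) \ne 0$ is exactly $p^{n-i-1}$ (respectively $p^{n-1}$), so a further Cartan expansion of $P^{b_i/p^{m(k(i))+1}}(d_{n,i}^{l_i})$ vanishes in this range.

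Given the vanishing of $P^b(E)$, iterating Cartan yields $P(\min(J), i_{(J)})(d^K) = P(\min(J), i_{(J)})(D) \cdot E$. The computation of $P(\min(J), i_{(J)})(D)$ then runs along the same lines as in the proof of Lemma \ref{SteenActionLemma3}: the first operation $P^{p^{\min(J)}}$ via Cartan and Proposition \ref{actionp-th} produces a sum of terms indexed by the $i$'s achieving $\min(J)$ (the non-achievers contribute nothing because $p$-adic constraints force single-index nonzero distributions), and the subsequent operations in $P(\min(J), i_{(J)})$ shift the Dickson generators as in Lemma \ref{SteenActionLemma3} while annihilating the side terms. The output matches the two cases of the theorem with $c = \min(J)$ and $j = i_{(J)}$; multiplying by $E$ recovers $d^K$ in the formula. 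The main obstacle is the vanishing argument for $P^b(E)$, since ruling out nontrivial Steenrod action on $d_{n,i}^{l_i}$ in the relevant range requires combining Proposition \ref{actionp-th} with dimension considerations on the Dickson algebra.
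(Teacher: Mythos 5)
Your strategy is workable and in fact makes explicit a point the paper's own proof glosses over, but the decompositions differ. The paper factors $d^{K}=d^{B}d^{K-B}$ where $B$ carries only the single lowest digit at the one extremal index $n-i_{\left( J\right) }$ (resp.\ $n$), invokes the second statement of Lemma \ref{SteenActionLemma3} to declare $d^{K-B}$ inert, and applies the first statement to $d^{B}$. You instead collect all lowest digits into $D$ and all higher digits into $E$, prove $P^{b}(E)=0$ for $0<b\leq p^{\min \left( J\right) }$ so that $E$ passes through every stage of the iteration, and then rerun the computation of Lemma \ref{SteenActionLemma3} on $D$, whose digits sit at non-uniform levels. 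Your version is more honest about why the higher $p$-adic digits never interfere (note that $d^{K-B}$ does not literally satisfy the hypotheses of Lemma \ref{SteenActionLemma3}, so the paper's appeal to it is itself a shortcut), at the cost of having to redo the ``side terms die'' analysis for $D$; that part of your sketch is at about the same level of detail as the paper's.

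The one genuine soft spot is your justification of the key vanishing claim. For $i<n$ the degree count does work: for $0<c<p^{n-i-1}$ the degree of $P^{c}d_{n,i}$ lies strictly between $\left\vert d_{n,i}\right\vert $ and $\left\vert d_{n,i+1}\right\vert $, and since every length-two monomial has degree at least $2\left\vert d_{n,1}\right\vert >\left\vert d_{n,n}\right\vert $, the Dickson algebra is zero in such degrees. For $i=n$ the degree count fails: at $p=2$ one has $\left\vert Sq^{1}d_{n,n}\right\vert =\left\vert d_{n,1}^{2}\right\vert $, and at odd primes $\left\vert P^{c}d_{n,n}\right\vert =\left\vert d_{n,1}^{2}\right\vert $ for $c=(p^{n-1}(p-2)+1)/(p-1)<p^{n-1}$, so degrees alone do not force $P^{c}d_{n,n}=0$ on the whole range $0<c<p^{n-1}$ --- and you do need that full range when you argue that a non-achieving index $n$ contributes nothing to the Cartan expansion of $P^{p^{\min \left( J\right) }}(D)$. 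The statement is true, but it requires the known complete description of the Steenrod action on $d_{n,n}$ (see \cite{H-P} or \cite{Kech1}) rather than Proposition \ref{actionp-th} plus a dimension count. With that input supplied, your argument goes through.
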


\begin{proof}
Let $b_{n-i_{\left( J\right) }}=a_{n-i_{\left( J\right) }}$, and $b_{t}=0$,
otherwise. Let $B=\left( b_{1},...,b_{n}\right) $. The second statement of
lemma \ref{SteenActionLemma3} implies that 
\begin{equation*}
P\left( \min \left( J\right) ,i_{\left( J\right) }\right) (d^{K})=\left(
P\left( \min \left( J\right) ,i_{\left( J\right) }\right) (d^{B})\right)
d^{K-B}\text{.}
\end{equation*}%
The first claim of lemma \ref{SteenActionLemma3} provides the claim. \ 
\end{proof}

\begin{remark}
Let $d^{K^{\prime }}=P\left( \min \left( J\right) ,i_{\left( J\right)
}\right) (d^{K})$, where $K$, $\min \left( J\right) $, and $i_{\left(
J\right) }$\ as in the Theorem above. Let $K^{\prime }=(k^{\prime }\left(
1\right) ,...,k^{\prime }\left( n\right) )$, then $k\left( n\right)
<k^{\prime }\left( n\right) $ and $k\left( i\right) \geq k^{\prime }\left(
i\right) $ for $i<n$. \ 
\end{remark}

\begin{corollary}
Let $d^{K}=\prod\limits_{1}^{n}d_{n,i}^{k\left( i\right) }$ such that $%
\sum\limits_{1}^{n-1}k\left( i\right) >0$. Then there exists a sequence $%
P\left( K\right) $ of Steenrod operations such that 
\begin{equation*}
P\left( K\right) d^{K}=d^{L}
\end{equation*}%
Here $d^{L}=\prod\limits_{1}^{n}d_{n,i}^{l\left( i\right) }$\ satisfies $%
k\left( n\right) <l\left( n\right) $, $k\left( i\right) >l\left( i\right) $
for some $i<n$ and $k\left( t\right) =l\left( t\right) $ for $t\neq i,n$.
\end{corollary}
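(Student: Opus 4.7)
The plan is to iterate Theorem \ref{Theorem Steenr-Milnor}. Each single application falls into one of two cases. When $i_{(J)}=n$, the output is $u\,d_{n,n}^{p^{m(k(n))}}d^{K}$, so only $k(n)$ grows and every $k(i)$ with $i<n$ is untouched; this does not yet match the conclusion of the corollary. When $i_{(J)}<n$, the output is $u\,d_{n,n}^{p^{m(k(n-i_{(J)}))}}d^{K}d_{n,n-i_{(J)}}^{-p^{m(k(n-i_{(J)}))}}$, which strictly increases $k(n)$, strictly decreases $k(n-i_{(J)})$ by the lowest term $p^{m(k(n-i_{(J)}))}$ of its $p$-adic expansion, and leaves every other exponent fixed; this is precisely the form demanded by the corollary (after absorbing the scalar unit into the Steenrod sequence). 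The task therefore reduces to showing that any sequence of iterations starting from $d^{K}$ must eventually produce an instance of $i_{(J)}<n$.

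To that end, the hypothesis $\sum_{1}^{n-1}k(i)>0$ ensures that $\mathcal{I}=\{i<n:k(i)\neq 0\}$ is nonempty. Since an $i_{(J)}=n$ step preserves each $k(i)$ with $i<n$, the constants $m(k(i))$ for $i\in\mathcal{I}$ remain frozen throughout any run of consecutive $i_{(J)}=n$ steps; only $k(n)$ evolves. Writing $k(n)=a_{n}p^{m(k(n))}+(\text{higher terms})$ with $1\le a_{n}\le p-1$, each such step adds $p^{m(k(n))}$, so $a_{n}$ is incremented by one, and after at most $p-1$ consecutive steps the digit carries, forcing $m(k(n))$ to strictly increase. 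Hence $m(k(n))\to\infty$ under iteration.

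Recalling from Definition \ref{Def1} that $i_{(J)}=n$ is equivalent to $m(k(n))\le m(k(i))-i$ for every $i\in\mathcal{I}$, and that the right side is a fixed finite quantity while the left tends to infinity, this inequality must fail after finitely many steps. At the first failure, $\min(J)$ is realised only by indices $i<n$, so $i_{(J)}<n$, and the corresponding application of the theorem produces the required $d^{L}$. The main obstacle I anticipate is the $p$-adic bookkeeping that establishes $m(k(n))\to\infty$; once one notes that the $k(i)$ for $i<n$ are genuinely preserved during the $i_{(J)}=n$ phase, this reduces to a routine carrying argument.
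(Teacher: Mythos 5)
Your argument is correct and follows the same route as the paper: iterate Theorem \ref{Theorem Steenr-Milnor} and observe that the case $i_{(J)}<n$ must eventually occur, at which point exactly one exponent $k(i)$ with $i<n$ drops while $k(n)$ grows. The paper simply asserts that case (2) ``will be applied at some stage''; your $p$-adic carrying argument showing $m(k(n))\to\infty$ while the thresholds $m(k(i))-i$ stay frozen supplies the justification the paper leaves implicit.
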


\begin{proof}
The hypothesis $\sum\limits_{1}^{n-1}k\left( i\right) >0$ implies that case
(2) of Theorem above will be applied at some stage of the procedure. So, for
some $i$, the corresponding exponent will be smaller in the new monomial.
\end{proof}

\begin{theorem}
\label{power of d-n,n}Let $d^{K}=\prod\limits_{1}^{n}d_{n,i}^{k\left(
i\right) }$. There exists a sequence of Steenrod operations $P\left( \Gamma
,K\right) $\ such that 
\begin{equation*}
P\left( \Gamma ,K\right) d^{K}=ud_{n,n}^{p^{m}}\text{.}
\end{equation*}%
for some natural number $m$ and unit $u$.
\end{theorem}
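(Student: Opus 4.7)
I would argue in two stages. First, use the preceding Corollary repeatedly to strip away the factors of $d_{n,1}, \ldots, d_{n,n-1}$, reducing the monomial to a pure power of $d_{n,n}$. Second, apply Theorem \ref{Theorem Steenr-Milnor} repeatedly to compact that exponent into a pure $p$-th power.

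\textbf{Stage I.} So long as $\sum_{i=1}^{n-1} k(i) > 0$, the Corollary furnishes a sequence $P(K)$ of Steenrod operations sending $d^{K}$ to $d^{L}$ with $l(i) < k(i)$ for some $i < n$ and $l(t) = k(t)$ for the remaining $t < n$. Hence $\sum_{i=1}^{n-1} l(i)$ is a strictly smaller non-negative integer, and after finitely many iterations we arrive at a monomial of the form $d_{n,n}^{l}$.

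\textbf{Stage II.} From $d_{n,n}^{l}$ the only applicable case of Theorem \ref{Theorem Steenr-Milnor} is $i_{(J)} = n$, producing $u\,d_{n,n}^{l + p^{m(l)}}$. It therefore suffices to verify the numerical claim that iterating $l \mapsto l + p^{m(l)}$ on $\mathbb{Z}_{>0}$ eventually lands on some $p^{m}$. Writing $l = p^{m(l)} r$ with $p \nmid r$, the update descends to $r \mapsto (r+1)/p^{\nu_{p}(r+1)}$ on integers coprime to $p$, and $l = p^{m}$ corresponds to $r = 1$. For $r \in \{1, \ldots, p-1\}$ a straightforward walk reaches $p - 1$ and then $1$; for $r \geq p$, within at most $p$ steps one meets a value whose successor is divisible by $p$, producing $r' \leq r/p + 1 < r$, so the trajectory eventually descends into $\{1, \ldots, p-1\}$ and terminates at $1$. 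Concatenating the operations from the two stages yields the desired $P(\Gamma, K)$. The one delicate point is this $p$-adic carry analysis in Stage II; Stage I is immediate from the preceding Corollary.
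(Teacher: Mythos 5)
Your proof follows the paper's own two-stage argument exactly: repeated application of the preceding Corollary to kill the exponents of $d_{n,1},\ldots,d_{n,n-1}$, then repeated application of case (1) of Theorem \ref{Theorem Steenr-Milnor} to turn $d_{n,n}^{l}$ into a $p$-th power. The only difference is that you actually verify termination of the iteration $l\mapsto l+p^{m(l)}$ via the $p$-adic carry analysis, a point the paper simply asserts; your argument there is correct.
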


\begin{proof}
Last corollary is applied repeatedly, so the sequence of the exponents of
the resulting monomial will be $\left( 0,...,0,l\left( n\right) \right) $.
If $l\left( n\right) $ is not a $p$-th power, applying case (1) of last
Theorem repeatedly the exponent of $d_{n,n}$\ will become a $p$-th power.\ \ 
\end{proof}

Before proceeding to the proof that the classical Dickson algebra is atomic,
let us consider an example which illuminates the key ingredient for the
proof.

\begin{example}
Let $p=2\ $and $n=3$. Let $%
d^{K}=d_{1}^{2^{2}+2^{3}}d_{2}^{2^{3}+2^{4}}d_{3}^{2+2^{2}}$ and $%
d^{K^{\prime }}=d_{1}^{2^{2}+2^{4}}d_{3}^{2+2^{2}+2^{4}}$. Then $%
|d^{K}|=|d^{K^{\prime }}|$. As in the last example there exist sequences of
Steenrod operations $Sq\left( \Gamma ,K\right) $ and $Sq\left( \Gamma
,K^{\prime }\right) $\ such that 
\begin{equation*}
Sq(\Gamma ,K)d^{K}=d_{3}^{2^{6}}=Sq\left( \Gamma ^{\prime },K^{\prime
}\right) d^{K^{\prime }}.
\end{equation*}%
We recall that $Sq\left( \Gamma ,K\right) =$ 
\begin{equation*}
Sq(2^{6},3)Sq(2^{4},1)Sq(2^{4},2)Sq(2^{4},3)Sq(2^{3},1)Sq(2^{3},2)Sq(2^{3},3)
\end{equation*}%
and 
\begin{equation*}
Sq\left( \Gamma ^{\prime },K^{\prime }\right)
=Sq(2^{6},3)Sq(2^{5},2)Sq(2^{4},3)Sq(2^{3},2)Sq(2^{3},3).
\end{equation*}%
But 
\begin{equation*}
Sq(\Gamma ,K)d^{K^{\prime }}=0.
\end{equation*}
\end{example}

\begin{definition}
A graded module $\mathcal{M}$ is called atomic, if given any degree
preserving module map $f:\mathcal{M}\rightarrow \mathcal{M}$\ which is an
isomorphism on the lowest positive degree, then $f$ is an isomorphism.
\end{definition}

It turns out that the classical Dickson algebra is atomic as a Steenrod
algebra module.

\begin{corollary}
\label{Th2}Let $f:D_{n}\rightarrow D_{n}$ be an $\mathcal{A}$-linear map of
degree $0$ such that $f(d_{n,1})\neq 0$. Then $f$ is an isomorphism.
\end{corollary}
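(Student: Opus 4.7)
First I observe that $|d_{n,1}|=2p^{n-1}(p-1)$ is the smallest positive degree appearing in $D_n$ and that $d_{n,1}$ is the unique monomial of that degree, since for $i>1$ one has $|d_{n,i}|=2p^{n-i}(p^{i}-1)>|d_{n,1}|$. Consequently the hypothesis $f(d_{n,1})\neq 0$ forces $f(d_{n,1})=u\,d_{n,1}$ for some $u\in(\mathbb{Z}/p)^{\ast}$, and $\mathcal{A}$-linearity gives $f(\beta d_{n,1})=u\,\beta d_{n,1}$ for every $\beta\in\mathcal{A}$. In particular $f$ acts as multiplication by $u$ on the cyclic $\mathcal{A}$-submodule $\mathcal{A}\cdot d_{n,1}\subseteq D_{n}$.

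Next I would show that every $p$-th power $d_{n,n}^{p^{m}}$ belongs to $\mathcal{A}\cdot d_{n,1}$. The case $m=0$ follows by iterating case (1) of Proposition~\ref{actionp-th}, which carries $d_{n,1}$ through the generators up to $d_{n,n}$. For the inductive step, if $d_{n,n}^{p^{m}}\in\mathcal{A}\cdot d_{n,1}$, then case (2) of the same proposition yields $d_{n,n}^{p^{m}}d_{n,1}^{p^{m}}\in\mathcal{A}\cdot d_{n,1}$; applying Theorem~\ref{power of d-n,n} to this monomial and then iterating the first branch of Theorem~\ref{Theorem Steenr-Milnor} (to raise the exponent from $c\,p^{m}$ to $(c+1)p^{m}$ until it reaches $p^{m+1}$) produces $d_{n,n}^{p^{m+1}}\in\mathcal{A}\cdot d_{n,1}$. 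Thus $f(d_{n,n}^{p^{m}})=u\,d_{n,n}^{p^{m}}\neq 0$ for every $m\geq 0$.

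The heart of the proof is then an injectivity argument. Suppose for contradiction that $f$ is not injective and choose $x\in\ker f$ with $x\neq 0$, writing $x=\sum_{K}c_{K}d^{K}$ over the monomials of degree $|x|$. For a distinguished $K_{0}$ in the support, selected via the lexicographic order on the $p$-adic data governing Definition~\ref{Def1}, the operation $P(\Gamma,K_{0})$ furnished by Theorem~\ref{power of d-n,n} sends $d^{K_{0}}$ to $u_{K_{0}}d_{n,n}^{p^{m_{K_{0}}}}$. The example preceding this corollary highlights the additional crucial property that $P(\Gamma,K_{0})d^{K'}=0$ for every other monomial $K'$ in the support, so $P(\Gamma,K_{0})x=c_{K_{0}}u_{K_{0}}d_{n,n}^{p^{m_{K_{0}}}}$. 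As $\ker f$ is closed under the $\mathcal{A}$-action this places $d_{n,n}^{p^{m_{K_{0}}}}$ in $\ker f$, contradicting the previous paragraph. Injectivity together with the finite dimensionality of each graded piece $D_{n}^{d}$ then upgrades to bijectivity.

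The main obstacle is establishing the triangularity claim used in the last paragraph, namely that $P(\Gamma,K_{0})$ annihilates every other monomial of the same degree as $d^{K_{0}}$. The identity $Sq(\Gamma,K)d^{K'}=0$ in the motivating example is the prototype, but a general argument calls for an explicit ordering on exponent sequences together with a careful tracking of how the pair $(\min(J),i_{(J)})$ in Definition~\ref{Def1} evolves at each step of the algorithm when that algorithm is applied to a monomial other than the one it was designed for.
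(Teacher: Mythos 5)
Your overall strategy coincides with the paper's: reduce everything to the fact that $f$ is a unit multiple of the identity on $d_{n,1}$ and hence on each $d_{n,n}^{p^{m}}$, then show that any nonzero element of $\ker f$ can be pushed by Steenrod operations onto a nonzero multiple of some $d_{n,n}^{p^{m}}$. But the step you yourself flag as ``the main obstacle'' --- that for a suitably chosen monomial $d^{K_{0}}$ in the support of $x$ the operation $P(\Gamma,K_{0})$ annihilates every other monomial of the same degree --- is exactly the content of the paper's proof, and leaving it unproved leaves the argument incomplete. Note also that the claim is not quite true in the form you state it: for an arbitrary ``lexicographically distinguished'' $K_{0}$ there is no reason the full composite $P(\Gamma,K_{0})$ should kill the other terms, and the paper does not assert this. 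What the paper actually does is compare only the \emph{first} factors $P\left(c_{1}(t),k_{1}(t)\right)$ of the operations attached to the various monomials, choose the term with $c_{1}$ minimal and, among ties, $k_{1}$ maximal (stripping off and applying any common prefix first, then iterating). If $c_{1}(1)<c_{1}(t)$, the vanishing clause of Lemma \ref{SteenActionLemma3} kills $d^{K(t)}$ already at the first factor; if $c_{1}(1)=c_{1}(2)$ with $k_{1}(1)>k_{1}(2)$, one uses the factorization
\begin{equation*}
P\left(c_{1}(1),k_{1}(1)\right)=P\left(c_{1}(1)-k_{1}(2)+1,\,k_{1}(1)-k_{1}(2)\right)P\left(c_{1}(2),k_{1}(2)\right)
\end{equation*}
together with the observation that the intermediate exponent $c_{1}(1)-k_{1}(2)+1$ is too small for the output of $P\left(c_{1}(2),k_{1}(2)\right)d^{K(2)}$, so again Lemma \ref{SteenActionLemma3} gives zero. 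Some argument of this kind is indispensable, and ``an explicit ordering together with careful tracking'' is a description of the missing work, not a substitute for it.

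A secondary, smaller point: your induction showing $d_{n,n}^{p^{m}}\in\mathcal{A}\cdot d_{n,1}$ for \emph{every} $m$ needs a little more care than ``apply Theorem \ref{power of d-n,n} and raise the exponent until it reaches $p^{m+1}$,'' since Theorem \ref{power of d-n,n} only promises \emph{some} $p$-th power as output. The clean route is to pass from $d_{n,n}^{cp^{m}}$ to $d_{n,n}^{cp^{m}}d_{n,1}^{p^{m}}$ by case (2) of Proposition \ref{actionp-th} and then to $d_{n,n}^{(c+1)p^{m}}$ by the second branch of Lemma \ref{SteenActionLemma3} with $j=n-1$, iterating $p$ times to reach $d_{n,n}^{p^{m+1}}$. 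This is repairable, but as written it is another place where the proposal asserts more than it proves.
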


\begin{proof}
By hypothesis and proposition \ref{actionp-th}, $f(d_{n,i})=\lambda d_{n,i}$
for $i=1,...,n$ after applying a suitable Steenrod operation.

Let $d^{K}$ and $f(d^{K})=0$, then according to last Theorem there exists a
sequence of Steenrod operations such that $P\left( \Gamma ,K\right)
d^{K}=ud_{n,n}^{p^{m}}$. Thus $P\left( \Gamma ,K\right) f\left( d^{K}\right)
=uf\left( d_{n,n}^{p^{m}}\right) $ and $0=ud_{n,n}^{p^{m}}$.

Let homogeneous monomials $d^{K(t)}$ for $1\leq t\leq l$\ and $f(\sum
a_{t}d^{K\left( t\right) })=0$. Let $P\left( \Gamma \left( t\right) ,K\left(
t\right) \right) $ be the appropriate corresponding sequences of Steenrod
operations as in last Theorem: 
\begin{equation*}
P\left( \Gamma \left( t\right) ,K\left( t\right) \right)
=\prod\limits_{1}^{m_{t}}P\left( c_{s}\left( t\right) ,k_{s}\left( t\right)
\right) .
\end{equation*}%
Without lost of generality, we suppose that at least one of the 
\begin{equation*}
P\left( c_{s}\left( 1\right) ,k_{s}\left( 1\right) \right) \text{'s}
\end{equation*}%
is different. Otherwise, the common part is applied on the $d^{K}$'s and we
consider the new terms. Let $P\left( \Gamma \left( 1\right) ,K\left(
1\right) \right) =\prod\limits_{1}^{m_{t}}P\left( c_{s}\left( 1\right)
,k_{s}\left( 1\right) \right) $ satisfy the properties $c_{1}\left( 1\right)
=\min \left\{ c_{1}\left( t\right) \ |\ 1\leq t\leq l\right\} $ and if there
are more than one, then $k_{1}\left( 1\right) $\ is the biggest among the
equal ones.

If $c_{1}\left( 1\right) <c_{1}\left( t\right) $, then 
\begin{equation*}
P\left( \Gamma \left( 1\right) ,K\left( 1\right) \right) f(\sum
a_{t}d^{K\left( t\right) })=a_{1}P\left( \Gamma \left( 1\right) ,K\left(
1\right) \right) d^{K\left( 1\right) }
\end{equation*}%
and the first part of the proof provides a contradiction.

If $c_{1}\left( 1\right) =c_{1}\left( 2\right) $, then $k_{1}\left( 1\right)
>k_{1}\left( 2\right) $ according to our assumption. Thus (please consider
last example) 
\begin{equation*}
P\left( c_{1}\left( 1\right) ,k_{1}\left( 1\right) \right) d^{K\left(
2\right) }=0\neq P\left( c_{1}\left( 1\right) ,k_{1}\left( 1\right) \right)
d^{K\left( 1\right) }.
\end{equation*}%
This is because 
\begin{equation*}
P\left( c_{1}\left( 1\right) ,k_{1}\left( 1\right) \right) =P\left(
c_{1}\left( 1\right) -k_{1}\left( 2\right) +1,k_{1}\left( 1\right)
-k_{1}\left( 2\right) \right) P\left( c_{1}\left( 2\right) ,k_{1}\left(
2\right) \right)
\end{equation*}%
and $c_{1}\left( 1\right) -k_{1}\left( 2\right) +1<c_{2}\left( 2\right) $.
By lemma \ref{SteenActionLemma3}: 
\begin{equation*}
P\left( c_{1}\left( 1\right) -k_{1}\left( 2\right) +1,k_{1}\left( 1\right)
-k_{1}\left( 2\right) \right) P\left( c_{1}\left( 2\right) ,k_{1}\left(
2\right) \right) d^{K\left( 2\right) }=0.
\end{equation*}

And the first part of the proof provides a contradiction.\ 
\end{proof}

Next we show that the ring of upper triangular invariants does not have this
property.

\begin{example}
\label{antiparad}Let $p=2$ and $H_{2}=P[y_{1},y_{2}]^{U_{2}}$\ the ring of
upper triangular invariants. Here $U_{2}=\left\{ \left( 
\begin{array}{cc}
1 & a \\ 
0 & 1%
\end{array}%
\right) \ |\ a=%
\mathbb Z%
/2%
\mathbb Z%
\right\} $. It is known that it is a polynomial algebra on $h_{1}=y_{1}$ and 
$h_{2}=y_{2}^{2}+y_{2}y_{1}$ (\cite{Mui}).\ Let $f:H_{2}\rightarrow H_{2}$
be an $\mathcal{A}$-linear map such that $f(h_{1})=h_{1}$. Since $%
Sq^{1}h_{1}=h_{1}^{2}\neq h_{2}$, $f\left( h_{2}\right) $\ can be defined
independently of $h_{1}$: $f\left( h_{2}\right) =ah_{2}+bh_{1}^{2}$\ with $%
a,b\in 
\mathbb Z%
/2%
\mathbb Z%
$. Even if $f\left( h_{2}\right) =h_{1}^{2}$, $f$\ is not an isomorphism: $%
f\left( d_{2,1}\right) =f(h_{2}+h_{1}^{2})=0=f(d_{2,0})=f(Sq^{1}d_{2,1})$.
\end{example}

\section{Dickson algebras are atomic}

In this section the previous results are extended to the extended Dickson
algebra.

Mui gave an invariant theoretic description of the cohomology algebra of the
symmetric group and calculated rings of invariants involving the exterior
algebra $E(x_{1},\cdots ,x_{n})$ as well in \cite{Mui}. We recall that $%
|x_{i}|=1$ and $\beta x_{i}=y_{i}$.

\begin{theorem}
\cite{Mui}The extended Dickson algebra 
\begin{equation*}
D_{n}:=(E(x_{1},\cdots ,x_{n})\otimes P[y_{1},\cdots ,y_{n}])^{GL_{n}}
\end{equation*}%
is a tensor product of the polynomial algebra $P[y_{1},\cdots
,y_{n}]^{GL_{n}}$ and the 
$\mathbb Z$%
$/p$%
$\mathbb Z$%
-module spanned by the set of elements consisting of the following
monomials: 
\begin{equation*}
M_{n;s_{1},...,s_{m}}L_{n}^{p-2};\ 1\leq m\leq n,\ \text{and }0\leq
s_{1}<\dots <s_{m}\leq n-1.
\end{equation*}%
Its algebra structure is determined by the following relations:\newline
a) $(M_{n;s_{1},...,s_{m}}L_{n}^{p-2})^{2}=0$ for$\ 1\leq m\leq n,\ $and $%
0\leq s_{1}<\dots <s_{m}\leq n-1$.\newline
b) $%
M_{n;s_{1},...,s_{m}}L_{n}^{(p-2)}d_{n,1}^{m-1}=(-1)^{m(m-1)/2}M_{n;s_{1}}L_{n}^{p-2}\dots M_{n;s_{m}}L_{n}^{p-2} 
$.\newline
Here $1\leq m\leq n$, and $0\leq s_{1}<\dots <s_{m}\leq n-1$.
\end{theorem}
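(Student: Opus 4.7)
The plan is to follow Mui's classical approach to modular invariant theory: first construct $L_n$ and $M_{n;s_1,\ldots,s_m}$ as explicit determinantal semi-invariants, then verify that the prescribed products genuinely lie in $D_n$, then establish spanning and linear independence over $P[y_1,\ldots,y_n]^{GL_n}$, and finally check the two relations by direct manipulation of determinants.

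First I would define
\[
L_n \;=\; \det\bigl(y_j^{p^{i-1}}\bigr)_{1 \leq i,j \leq n},
\]
whose $(p-1)$-st power equals $d_{n,n}$ up to a unit, and define $M_{n;s_1,\ldots,s_m}$ to be the $n \times n$ determinant obtained from the defining matrix of $L_n$ by replacing the rows of exponents $p^{s_1},\ldots,p^{s_m}$ with the single row $(x_1,\ldots,x_n)$ (antisymmetrized appropriately in the $x$-entries). A short column-operation calculation shows that under $g \in GL_n$ both $L_n$ and $M_{n;s_1,\ldots,s_m}$ transform by multiplication by $\det(g)$. Since $\det(g)^{p-1}=1$ in $\mathbb{Z}/p\mathbb{Z}$, the product $M_{n;s_1,\ldots,s_m}L_n^{p-2}$ is a genuine $GL_n$-invariant and hence lies in $D_n$.

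Next I would show that these elements, together with multiplication by $P[y]^{GL_n}$, exhaust $(E \otimes P)^{GL_n}$. Decomposing by exterior degree gives a $GL_n$-stable splitting $E(x_1,\ldots,x_n) \otimes P = \bigoplus_{m=0}^{n} \Lambda^m(x) \otimes P$. On the summand of exterior degree $m$ I would argue that the $M_{n;s_1,\ldots,s_m}L_n^{p-2}$ with $0 \leq s_1 < \cdots < s_m \leq n-1$ generate the invariants as a free module over $P[y]^{GL_n}$; linear independence follows by identifying distinct leading monomials in a $y$-exponent lexicographic order. The relations (a) and (b) are then immediate: (a) holds because $M_{n;s_1,\ldots,s_m}$ is linear in each $x_i$ via its top row, so its square lives in $\Lambda^{2m}(x)$ and must contain a repeated $x_i$ factor, hence vanishes; (b) is a Laplace expansion identity, where the product $M_{n;s_1}L_n^{p-2}\cdots M_{n;s_m}L_n^{p-2}$ is rewritten, after antisymmetrizing the $m$ distinct $x$-rows, as the single determinant $M_{n;s_1,\ldots,s_m}$ times the Vandermonde factor $d_{n,1}^{m-1}L_n^{m(p-2)-(p-2)}$, with the sign $(-1)^{m(m-1)/2}$ coming from reordering $x_1 \wedge \cdots \wedge x_m$ into the natural order.

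The main obstacle is the spanning claim. Because $p$ divides $|GL_n|$, Molien's formula and the Reynolds averaging operator are unavailable, so one cannot simply count invariants via a Hilbert series. Instead, given an invariant $\omega \in \Lambda^m(x) \otimes P$, I would argue by explicit reduction: apply successive divisions by the Dickson generators $d_{n,i}$ and subtractions of $P[y]^{GL_n}$-multiples of the $M_{n;s_1,\ldots,s_m}L_n^{p-2}$ to decrease the polynomial degree of $\omega$, inducting on that degree and on $n$. This reductive step, essentially a modular analogue of the Gram--Schmidt or straightening algorithm, is where the bulk of Mui's original effort resides and is the heart of the proof.
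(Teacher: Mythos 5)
This statement is not proved in the paper at all: it is quoted from M\`ui \cite{Mui}, with the determinantal definitions of $M_{n;s_1,\ldots,s_m}$ and $L_n$ recalled immediately afterwards, so there is no in-paper argument to compare against. Your outline does reproduce the general shape of M\`ui's approach (semi-invariance of the determinants under $GL_n$ via $\det(g)$, hence invariance of $M_{n;s_1,\ldots,s_m}L_n^{p-2}$ because $\det(g)^{p-1}=1$; splitting by exterior degree; freeness over $P[y]^{GL_n}$ via leading terms), and you are right that the spanning claim is where all the work lies --- your ``successive division'' sketch is a statement of intent rather than an argument, but you flag it as such.

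There is, however, one step that fails as written: your justification of relation (a). Expanding along the $m$ rows of $x$'s gives $M_{n;s_1,\ldots,s_m}=\sum_{|S|=m}\pm\, x_S\,\Delta_S$ with $\Delta_S$ the complementary minor in the $y$'s. The diagonal terms of the square do contain repeated $x_i$'s, but the cross terms $x_S x_{S'}\Delta_S\Delta_{S'}$ with $S\cap S'=\emptyset$ do not; and for $m$ even they do not cancel in pairs either, since then $x_Sx_{S'}=x_{S'}x_S$. Indeed a general element of $\Lambda^m(x)\otimes P$ with $m$ even has nonzero square, e.g.\ $(x_1x_2+x_3x_4)^2=2x_1x_2x_3x_4$ for $p>2$. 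The vanishing here is a genuine Pl\"ucker-type identity among the complementary minors, not a formal consequence of anticommutativity; alternatively, (a) for all $m$ follows from (b) together with the case $m=1$ (where $M_{n;s}$ has odd exterior degree, so its square vanishes for $p>2$) after cancelling the Dickson class, which is legitimate because $E\otimes P$ is free over the domain $P$. Finally, a degree count in (b) shows the Dickson class appearing there must be the top class $d_{n,n}=\pm L_n^{p-1}$ of degree $2(p^n-1)$ rather than $d_{n,1}$ --- your own computation of the ``Vandermonde factor'' yields $L_n^{(p-1)(m-1)}=\pm d_{n,n}^{m-1}$, so the statement as printed carries a typo that your write-up silently inherits.
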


The elements $M_{n;s_{1},...,s_{m}}$ above have been defined by Mui in \cite%
{Mui} as follows: 
\begin{equation*}
M_{n;s_{1},...,s_{m}}=\frac{1}{m!}\left\vert 
\begin{array}{ccccc}
x_{1} &  & \cdots &  & x_{1} \\ 
\vdots &  &  &  & \vdots \\ 
x_{1} &  & \cdots &  & x_{n} \\ 
y_{1} &  & \cdots &  & y_{n} \\ 
\vdots &  &  &  & \vdots \\ 
y_{1}^{p^{n-1}} &  & \cdots &  & y_{n}^{p^{n-1}}%
\end{array}%
\right\vert \text{,}L_{n}=\left\vert 
\begin{array}{ccccc}
y_{1} &  & \cdots &  & x_{n} \\ 
y_{1}^{p} &  & \cdots &  & y_{n}^{p} \\ 
\vdots &  &  &  & \vdots \\ 
y_{1}^{p^{n-1}} &  & \cdots &  & y_{n}^{p^{n-1}}%
\end{array}%
\right\vert
\end{equation*}%
Here there are $m$ rows of $x_{i}$'s and the $s_{i}$-th's powers are
omitted, where $0\leq s_{1}<\dots <s_{m}\leq n-1$ in the first determinant.

The degrees of elements above are $%
|M_{n;s_{1},...,s_{m}}(L_{n})^{p-2}|=m+2((p^{n}-1)-(p^{s_{1}}+\dots
+p^{s_{m}}))$.

Next, some important subalgebras of $D_{n}$ are defined.

\begin{definition}
\label{Def SD-k}Let $SD_{n}$ be the subalgebra of $D_{n}$ generated by: 
\begin{equation*}
d_{n,s+1}\text{, }M_{n;s}(L_{n})^{p-2}\text{ and }%
M_{n;s_{1},s_{2}}(L_{n})^{p-2}\text{. }
\end{equation*}%
Here $0\leq s\leq n-1$. $0\leq s_{1}<s_{2}\leq n-1$.
\end{definition}

$D_{n}$ and $SD_{n}$\ are $\mathcal{A}$-algebras. It is known that $SD_{n}$\
is related with the $\hom $-dual of the length $n$\ coalgebra $R[n]$ of the
Dyer-Lashof algebra $R$\ (\cite{Kech3}).\ \ 

\begin{definition}
\label{Ideal In}Let $I_{n}$ stand for the ideal of $SD_{n}$ generated by 
\begin{equation*}
\{d_{n,n}\text{, }M_{n;i}(L_{n})^{p-2}\text{ and }M_{n;0,i}(L_{n})^{p-2}\ |\
0\leq i\leq n-1\}.
\end{equation*}
\end{definition}

The ideal $I_{n}$\ is related with the $\hom $-dual of the length $n$ module
of indecomposable elements of $H_{\ast }(Q_{0}S^{0};%
\Bbb{Z}/\Bbb{Z}p%
)$ (\cite{C-P-S1}).\ 

First, we recall the next proposition concerning the action of the Steenrod
algebra generators on exterior generators on the extended Dickson algebra.
For the benefit of the reader we also recall that $P^{p^{k}}L_{n}^{p-2}=0$
for $0\leq k\leq n-2$.

\begin{proposition}
\cite{Kech3}%
\begin{equation*}
\beta M_{n;0}L_{n}^{n-2}=d_{n,n};
\end{equation*}%
\begin{equation*}
\beta M_{n;0,s}L_{n}^{n-2}=-M_{n;s}L_{n}^{n-2}\text{, for }n-1\geq s>0;
\end{equation*}%
\begin{equation*}
P^{p^{s-1}}M_{n;s}L_{n}^{n-2}=M_{n,s-1}L_{n}^{n-2}\text{, for }n-1\geq s>0.
\end{equation*}
\end{proposition}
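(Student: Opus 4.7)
The plan is to compute each formula by directly manipulating the determinantal definitions of $M_{n;s_{1},\ldots,s_{m}}$ and $L_{n}$, exploiting that $\beta$ is a graded derivation with $\beta x_{i}=y_{i}$ and $\beta y_{i}=0$, and that the $P^{p^{k}}$ satisfy the Cartan formula. The recurring mechanism is that when a Steenrod operation acts on a row of such a determinant, it either produces a row already present (making the determinant vanish) or reinstates one of the omitted rows, converting one $M$-element into another.

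For the first identity, since $L_{n}$ involves only the $y_{i}$'s, the Leibniz rule gives $\beta(M_{n;0}L_{n}^{p-2})=(\beta M_{n;0})L_{n}^{p-2}$. In $M_{n;0}$ the row $(y_{1}^{p^{0}},\ldots,y_{n}^{p^{0}})$ is the omitted one; when $\beta$ acts on the unique row of $x_{i}$'s it produces exactly this missing row, so $\beta M_{n;0}=L_{n}$. Multiplying by $L_{n}^{p-2}$ yields $L_{n}^{p-1}=d_{n,n}$ via the classical Dickson identity. For the second identity, $M_{n;0,s}$ has two rows of $x_{i}$'s and omits the rows $y^{p^{0}}$ and $y^{p^{s}}$; the derivation $\beta$ acts on each of the two $x$-rows in turn, and in either case the replacement row $(y_{1},\ldots,y_{n})$ fills only the gap at $y^{p^{0}}$, since $s>0$. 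The factor $1/2!$ combined with the antisymmetrisation of the determinant produces the stated sign and yields $\beta M_{n;0,s}=-M_{n;s}$.

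For the third identity, the Cartan formula together with the recalled fact $P^{p^{k}}L_{n}^{p-2}=0$ for $0\le k\le n-2$ reduces the computation to $(P^{p^{s-1}}M_{n;s})L_{n}^{p-2}$, since $s-1\le n-2$. Expanding by Cartan row by row inside $M_{n;s}$, I observe that $P^{p^{s-1}}$ annihilates every $x_{i}$ (trivially for $s\ge 1$ from $|x_{i}|=1$), and that $P^{p^{s-1}}(y_{i}^{p^{j}})=\binom{p^{j}}{p^{s-1}}y_{i}^{p^{j}+p^{s-1}(p-1)}$, which vanishes mod $p$ unless $j=s-1$ by Lucas' theorem. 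Only the action on the $y^{p^{s-1}}$-row survives: it is transformed into the $y^{p^{s}}$-row, reinstating the previously omitted row and leaving $y^{p^{s-1}}$ absent, which is exactly $M_{n;s-1}$.

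The main obstacle is the book-keeping of the Cartan expansion for the mixed product $M_{n;s}L_{n}^{p-2}$: the intermediate terms $P^{i}(M_{n;s})\cdot P^{p^{s-1}-i}(L_{n}^{p-2})$ with $i$ not a pure power of $p$ must be argued away. The cleanest route is the same row-matching principle, since any row modification inside the $L_{n}$-factor either produces a duplicated row or a row already present in $M_{n;s}$, forcing the term to vanish. This same principle underlies the recalled identity $P^{p^{k}}L_{n}^{p-2}=0$.
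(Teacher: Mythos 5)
The paper offers no proof of this proposition: it is recalled verbatim from \cite{Kech3}, so there is nothing internal to compare against. Your direct determinantal computation is the standard (and correct) way to establish it, and all three identities go through as you describe: $\beta$ is a derivation killing the $y$'s, so it only converts an $x$-row into the $y^{p^{0}}$-row, which fills the omitted slot and gives $L_{n}$ (hence $L_{n}^{p-1}=d_{n,n}$) in the first case and $\pm M_{n;s}$ in the second; and by the unstable/Lucas constraint $P^{j}(y_{i}^{p^{k}})\neq 0$ only for $j\in\{0,p^{k}\}$, the operation $P^{p^{s-1}}$ can only promote the row $y^{p^{s-1}}$ to $y^{p^{s}}$, reinstating the omitted row of $M_{n;s}$ and producing $M_{n;s-1}$ with no sign since the slots $s-1$ and $s$ are adjacent. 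The one place your justification is imprecise is the disposal of the mixed Cartan terms $P^{i}(M_{n;s})\cdot P^{p^{s-1}-i}(L_{n}^{p-2})$: you say a modified row of an $L_{n}$-factor vanishes because it duplicates a row \emph{or} coincides with a row already present in $M_{n;s}$ --- the second alternative is not a vanishing mechanism, since $M_{n;s}$ and the $L_{n}$-factors are separate determinants being multiplied, not rows of one determinant. The correct argument is entirely internal to each $L_{n}$-factor: the only admissible single-row actions send $y^{p^{k}}$ to $y^{p^{k+1}}$, which for $k<n-1$ duplicates a row of that same factor, while reaching $y^{p^{n}}$ from the top row costs $P^{p^{n-1}}$, and $p^{n-1}>p^{s-1}$ because $s\leq n-1$; hence $P^{j}(L_{n}^{p-2})=0$ for $0<j\leq p^{s-1}$, which is exactly the recalled fact $P^{p^{k}}L_{n}^{p-2}=0$ for $k\leq n-2$ together with its refinement to non-$p$-power exponents. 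With that repair the proof is complete.
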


\begin{remark}
Propositions \ref{actionp-th} and the last one imply that $I_{n}$ is closed
under the Steenrod algebra action.
\end{remark}

In the next lemmata we explain step by step how a monomial is transformed in
to a power of $d_{n,n}$. We start with an application of the proposition
above.

\begin{lemma}
\label{Bhta apo 0}%
\begin{equation*}
\beta P\left( i-1,i\right)
M_{n;i,s_{1},...,s_{l}}L_{n}^{n-2}=-M_{n;s_{1},...,s_{l}}L_{n}^{n-2}\text{.}
\end{equation*}%
Here $0\leq i<s_{1}<...<s_{l}\leq n-1$.%
\begin{equation*}
\beta P\left( t-1,t\right) M_{n;i}L_{n}^{n-2}=\left\{ 
\begin{array}{cc}
d_{n,n} & \text{for }t=i \\ 
0 & \text{for }t\neq i%
\end{array}%
\right. \text{.}
\end{equation*}%
\begin{equation*}
\beta \underbrace{P^{p^{0}}\beta }...\underbrace{P^{p^{l-2}}...P^{p^{0}}%
\beta }M_{n;0,1,...,l-1}L_{n}^{p-2}=ud_{n,n}\text{.}
\end{equation*}
\end{lemma}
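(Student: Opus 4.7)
My plan is to reduce every line of the lemma to repeated use of the preceding proposition, the Cartan formula, and the vanishing $P^{p^k}L_n^{p-2}=0$ for $0\le k\le n-2$. Two mechanical facts underlie everything: first, $\beta$ acts on $M_{n;0,t_1,\ldots,t_r}L_n^{p-2}$ by removing the leading $0$, giving $\pm M_{n;t_1,\ldots,t_r}L_n^{p-2}$ (this generalizes the stated $r=1$ case); second, thanks to $P^{p^k}L_n^{p-2}=0$, the operator $P^{p^k}$ on any $M_{n;t_1,\ldots,t_r}L_n^{p-2}$ acts only on the determinantal factor, and row-by-row inspection using $P^{p^k}(y^{p^r})=y^{p^{k+1}}$ for $r=k$ and $0$ otherwise shows that $P^{p^k}$ replaces $k{+}1$ by $k$ in the index list, yielding $\pm M_{n;\ldots}L_n^{p-2}$, and is zero unless $k{+}1\in\{t_1,\ldots,t_r\}$ and $k\notin\{t_1,\ldots,t_r\}$.

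The first formula follows by unfolding $P(i-1,i)=P^{p^0}P^{p^1}\cdots P^{p^{i-1}}$ and applying the operators from the right: $P^{p^{i-1}}$ shifts the leading index $i$ to $i-1$ (the other indices $s_j>i$ are unaffected because the nonzero condition fails for them), and iterating gives $M_{n;0,s_1,\ldots,s_l}L_n^{p-2}$, after which $\beta$ yields $-M_{n;s_1,\ldots,s_l}L_n^{p-2}$. The second formula then follows in the case $t=i$ by the same argument with empty $s$-sequence and $\beta M_{n;0}L_n^{p-2}=d_{n,n}$; for $t\ne i$ the rightmost operator $P^{p^{t-1}}$ already annihilates $M_{n;i}L_n^{p-2}$, since the nonzero condition would require $t\in\{i\}$.

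The third formula is by induction on $l$, the base $l=1$ being the stated $\beta M_{n;0}L_n^{p-2}=d_{n,n}$. For the inductive step I check that the rightmost group $P^{p^{l-2}}\cdots P^{p^0}\beta$ maps $M_{n;0,1,\ldots,l-1}L_n^{p-2}$ to $\pm M_{n;0,1,\ldots,l-2}L_n^{p-2}$: first $\beta$ removes the $0$ to give $\pm M_{n;1,2,\ldots,l-1}L_n^{p-2}$, then $P^{p^0},P^{p^1},\ldots,P^{p^{l-2}}$ successively replace $1\to 0,\ 2\to 1,\ \ldots,\ l-1\to l-2$, each step valid because at that moment the required index is present and its shift target is absent. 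The remaining outer $\beta$ together with the other $l-2$ groups is the operator of the lemma for $l-1$ applied to the result, and induction finishes the proof.

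The main obstacle is establishing the generalized $\beta$-rule invoked in paragraph one: the cited proposition treats only $r\le 1$, so proving $\beta M_{n;0,t_1,\ldots,t_r}L_n^{p-2}=\pm M_{n;t_1,\ldots,t_r}L_n^{p-2}$ in full generality requires a direct computation expanding $\beta$ via Leibniz on Mui's determinantal formula (the $x$-row corresponding to the $0$-index becomes a $y$-row, exactly filling the missing $y^{p^0}$-row). Once this and the analogous row-by-row analysis for $P^{p^k}$ are in place, the rest of the proof is bookkeeping via Cartan and Proposition~\ref{actionp-th}.
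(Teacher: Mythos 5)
Your argument is correct and is exactly what the paper intends here: the paper prints no proof of this lemma at all, remarking only that it is ``an application of the proposition above,'' and your iterated index-shifting by $P^{p^{k}}$ (nonzero precisely when $k+1$ is an omitted power and $k$ is not, the new $y^{p^{k+1}}$-row otherwise duplicating an existing row) together with $\beta$ removing the index $0$ is that application written out. You are also right to flag that the multi-index Bockstein rule $\beta M_{n;0,t_{1},\ldots,t_{r}}L_{n}^{p-2}=\pm M_{n;t_{1},\ldots,t_{r}}L_{n}^{p-2}$ is not literally contained in the cited proposition and requires the short determinant computation you describe ($\beta$ turns an $x$-row into the $y^{p^{0}}$-row and $\beta L_{n}=0$); with that supplied, the remaining steps are the bookkeeping you give, up to the precise sign, which your $\pm$ leaves open but which is immaterial for the later use of the lemma.
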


\begin{definition}
Let $M=M_{n;s_{1},...,s_{l}}L_{n}^{p-2}$ and $\left(
t_{1},...,t_{n-l}\right) $\ be the support of $\left( s_{1},...,s_{l}\right) 
$\ in $\left\{ 0,1,...,n-1\right\} $. Let $P\left( B\left( M\right) \right)
:=$%
\begin{equation*}
\beta \underbrace{P^{p^{0}}\beta }...\underbrace{P^{p^{l-2}}...P^{p^{0}}%
\beta }\underbrace{P^{p^{l-1}}...P^{p^{t_{1}}}}...\underbrace{%
P^{p^{l+k-2}}...P^{p^{t_{k}}}}...\underbrace{P^{p^{n-2}}...P^{p^{t_{n-l}}}}%
\text{.}
\end{equation*}
\end{definition}

\begin{lemma}
\label{P(B(M))}Let $M=M_{n;s_{1},...,s_{l}}L_{n}^{p-2}$, then 
\begin{equation*}
P\left( B\left( M\right) \right) M=ud_{n,n}\text{.}
\end{equation*}
\end{lemma}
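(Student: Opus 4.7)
The plan is to apply $P(B(M))$ to $M$ working from the rightmost (first-applied) operation outward, and show the cumulative effect passes through $\pm M_{n;0,1,\ldots,l-1}L_{n}^{p-2}$ before the final Bockstein tail finishes the job. First I would note that every $P^{p^{j}}$ appearing anywhere in $P(B(M))$ has $0\le j\le n-2$, so by the Cartan formula together with the stated vanishing $P^{p^{k}}L_{n}^{p-2}=0$ for $0\le k\le n-2$, each such $P^{p^{j}}$ acts only on the $M_{n;\ldots}$ factor and leaves $L_{n}^{p-2}$ intact. The Bocksteins in the tail are handled separately via Lemma \ref{Bhta apo 0}.

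Next, in the Steenrod-power phase given by the product of blocks $\underbrace{P^{p^{l+k-2}}\cdots P^{p^{t_{k}}}}$ for $k=1,\ldots,n-l$, I would process the blocks in right-to-left order (so $k=n-l$ first) and prove by downward induction on $k$ that after the blocks $n-l,n-l-1,\ldots,k$ have been applied the monomial equals $\pm M_{n;s_{1},\ldots,s_{k-1},k-1,k,\ldots,l-1}L_{n}^{p-2}$. The engine of the inductive step is the single-index identity $P^{p^{s-1}}M_{n;s}L_{n}^{p-2}=M_{n;s-1}L_{n}^{p-2}$ from the quoted proposition of \cite{Kech3}, extended to the multi-index setting by expanding $M_{n;s_{1},\ldots,s_{l}}L_{n}^{p-2}$ as a product of single-index factors via relation (b) of Mui's theorem and applying Cartan. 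When $k=1$ the induction delivers $\pm M_{n;0,1,\ldots,l-1}L_{n}^{p-2}$, exactly matching the input required by the third identity in Lemma \ref{Bhta apo 0}.

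Finally, applying the remaining prefix $\beta\underbrace{P^{p^{0}}\beta}\cdots\underbrace{P^{p^{l-2}}\cdots P^{p^{0}}\beta}$ to $M_{n;0,1,\ldots,l-1}L_{n}^{p-2}$ is precisely the third identity of Lemma \ref{Bhta apo 0}, which yields $ud_{n,n}$ with $u$ a unit, absorbing all accumulated signs from the sweep.

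The main technical obstacle I expect is the inductive sweep through the blocks. The proposition quoted from \cite{Kech3} only addresses single-index $M_{n;s}$, whereas the lemma needs the reduction applied inside $M_{n;s_{1},\ldots,s_{l}}$. Verifying that at each intermediate step the specific $P^{p^{j}}$ being applied reduces the unique currently-present index $j+1$ to $j$ \textbf{without} generating cross-terms that hit another $s_{i}$ or create a duplicate index (which would kill the monomial by antisymmetry) requires a careful Cartan-formula bookkeeping argument together with the determinantal antisymmetry of $M_{n;s_{1},\ldots,s_{l}}$. Once that bookkeeping is pinned down and the intermediate monomials are shown to match the inductive template, the rest is routine.
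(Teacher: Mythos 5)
Your global architecture matches the paper's: sweep through the power blocks right-to-left to reduce $M_{n;s_{1},\ldots,s_{l}}L_{n}^{p-2}$ to (a unit times) $M_{n;0,1,\ldots,l-1}L_{n}^{p-2}$, then invoke the third identity of Lemma \ref{Bhta apo 0} for the Bockstein tail. But the step you yourself flag as the main technical obstacle is exactly the content of the lemma, and the tool you propose for it does not suffice. Mui's relation (b) expresses $M_{n;s_{1},\ldots,s_{m}}L_{n}^{p-2}d_{n,1}^{m-1}$ as a product of single-index factors, so to transfer the Steenrod action you would have to control, via Cartan, every term $P^{a}$ with $0<a<p^{j}$ acting on each factor $M_{n;s_{i}}L_{n}^{p-2}$ and on the extra factor $d_{n,1}^{m-1}$; the proposition quoted from \cite{Kech3} only gives the single operation $P^{p^{s-1}}$ on $M_{n;s}L_{n}^{p-2}$ and says nothing about the lower Cartan terms or about removing $d_{n,1}^{m-1}$ afterwards. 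The paper's proof sidesteps all of this by computing directly on the monomial expansion of the determinant: $M_{n;s_{1},\ldots,s_{l}}$ is a signed sum of monomials $x_{1}\cdots x_{l}\,y_{l+1}^{p^{t_{1}}}\cdots y_{n}^{p^{t_{n-l}}}$, and since $P^{a}(y^{p^{t}})=0$ unless $a\in\{0,p^{t}\}$ while $P^{a}x_{i}=0$ for $a>0$, the Cartan expansion of $P^{p^{j}}$ on such a monomial has at most one surviving term, namely the one raising the unique exponent $p^{j}$ (if present) to $p^{j+1}$; any resulting repeated exponent kills the determinant by antisymmetry. That is precisely the bookkeeping you deferred, and it is the whole proof of the power phase.

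A secondary point: your inductive template $\pm M_{n;s_{1},\ldots,s_{k-1},k-1,k,\ldots,l-1}L_{n}^{p-2}$ for the state after blocks $n-l,\ldots,k$ is not correct. Take $n=4$, $l=2$, $(s_{1},s_{2})=(0,3)$, so $(t_{1},t_{2})=(1,2)$: the block for $k=2$ is the single operation $P^{p^{2}}$ and produces $M_{4;0,2}L_{4}^{p-2}$, whereas your template predicts $M_{4;0,1}L_{4}^{p-2}$. The correct intermediate has missing-index set $\{0,\ldots,l+k-2\}\setminus\{t_{1},\ldots,t_{k-1}\}$, which collapses to $\{0,\ldots,l-1\}$ only at $k=1$.
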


\begin{proof}
We recall that $M_{n;s_{1},...,s_{l}}$ is a sum of monomials of the form $%
x_{1}...x_{l}y_{l+1}^{p^{t_{1}}}...y_{n}^{p^{t_{n-l}}}$.\ Let $s_{l}=n-1$.
Then 
\begin{equation*}
P^{p^{n-2}}...P^{p^{t_{n-l}}}y_{l+1}^{p^{t_{1}}}...y_{n}^{p^{t_{n-l}}}=y_{l+1}^{p^{t_{1}}}...y_{n-1}^{p^{t_{n-l-1}}}y_{n}^{p-2}.
\end{equation*}%
Hence $%
P^{p^{n-2}}...P^{p^{t_{n-l}}}M=M_{n;s_{1},...,s_{l-1},t_{n-l}}L_{n}^{p-2}$.

Let $s_{l}<n-1$ and $k$\ maximal such that $t_{k}<t_{k+1}-1$ and $k<n-l$. In
this case%
\begin{equation*}
P^{p^{t_{k+1}-2}}...P^{p^{t_{k}}}y_{l+1}^{p^{t_{1}}}...y_{n}^{p^{t_{n-l}}}=y_{l+1}^{p^{t_{1}}}...y_{n}^{p^{t_{n-l}}}y_{k}^{p^{t_{k+1}-1}}y_{k}^{-p^{t_{k}}}.
\end{equation*}%
Hence $P^{p^{t_{k+1}-2}}...P^{p^{t_{k}}}M=M_{n;s_{1},...,t_{k},\widehat{%
t_{k+1}-1},...,s_{l}}L_{n}^{p-2}$. Here $\widehat{t_{k+1}-1}$ means that the
index $t_{k+1}-1$ is missing. Now the claim follows.
\end{proof}

The main point of this section is to prove that, if $M\neq M^{\prime }$,
then $P\left( B\left( M\right) \right) M^{\prime }=0$. The next example
demonstrates the idea.

\begin{example}
1) $P^{p^{7}}P^{p^{6}}M_{10;4,7,8}L_{10}^{p-2}=M_{10;4,6,7}L_{10}^{p-2}$.

$%
P^{p^{7}}P^{p^{6}}M_{10;3,5,7}L_{10}^{p-2}=P^{p^{7}}M_{10;3,5,6}L_{10}^{p-2}=0 
$.

2) $P^{p^{8}}M_{10;3,5,9}L_{10}^{p-2}=M_{10;3,5,8}L_{10}^{p-2}$.

$P^{p^{8}}M_{10;3,5,7}L_{10}^{p-2}=0$.
\end{example}

\begin{proposition}
Each element $M=M_{n;s_{1},...,s_{l}}L_{n}^{p-2}$ uniquely determines $%
P\left( B\left( M\right) \right) $ such that $P\left( B\left( M\right)
\right) M=ud_{n,n}$ and $P\left( B\left( M\right) \right) M^{\prime }=0$\
for $M^{\prime }=M_{n;s_{1}^{\prime },...,s_{l^{\prime }}^{\prime
}}L_{n}^{p-2}\neq M$. \ 
\end{proposition}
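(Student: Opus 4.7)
The previous lemma already supplies the positive statement $P(B(M))M = ud_{n,n}$, so the new content is the vanishing $P(B(M))M' = 0$ for every other basis element $M' = M_{n;s'_1,\dots,s'_{l'}}L_n^{p-2}$. My plan is to translate every Steenrod operation appearing in $P(B(M))$ into a combinatorial move on index sets, and then to show that only the index set of $M$ itself is compatible with the entire prescribed sequence of moves.

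First, for any basis element $N = M_{n;r_1,\dots,r_k}L_n^{p-2}$ I would introduce its \emph{exponent set} $T_N := \{0,1,\dots,n-1\} \setminus \{r_1,\dots,r_k\}$, which records the $y$-exponents $p^t$ appearing in the determinantal expansion of $N$. Because these exponents are distinct powers of $p$, the Cartan formula together with the elementary identity $P^{p^a}(y^{p^c}) = \delta_{a,c}\, y^{p^{c+1}}$ implies that $P^{p^a}N$ is nonzero iff $a \in T_N$ and $a+1 \notin T_N$ (otherwise two rows of the underlying determinant coincide after the move, and the result vanishes by antisymmetry); in that case $P^{p^a}N$ equals, up to a unit, the basis element whose exponent set is $(T_N \setminus \{a\}) \cup \{a+1\}$. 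This rule immediately re-explains Lemma \ref{P(B(M))}: reading $P(B(M))$ from right to left, the block $P^{p^{n-2}} \cdots P^{p^{t_{n-l}}}$ raises the top exponent from $p^{t_{n-l}}$ up to $p^{n-1}$, the next block raises $p^{t_{n-l-1}}$ to $p^{n-2}$, and so on, eventually producing $uM_{n;0,1,\dots,l-1}L_n^{p-2}$; the Bockstein cascade from Lemma \ref{Bhta apo 0} then converts this to a unit times $d_{n,n}$.

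Next, I would trace the same operator on $M'$. The rightmost operation $P^{p^{t_{n-l}}}$ is nonzero only if $t_{n-l} \in T_{M'}$ and $t_{n-l}+1 \notin T_{M'}$; the following operation $P^{p^{t_{n-l}+1}}$ then demands $t_{n-l}+2 \notin T_{M'}$; inductively, surviving the entire first block forces $t_{n-l}$ to be the largest element of $T_{M'}$ and $t_{n-l}+1,\dots,n-1$ to lie outside $T_{M'}$. Repeating the argument block by block, a nonvanishing polynomial stage forces $T_{M'}$ to meet $T_M$ at every $t_k$, and since each block consumes exactly one $t_k$, this pins down $T_{M'} = T_M$, hence $l' = l$ and $M' = M$, contradicting $M' \neq M$. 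Any mismatch between $T_M$ and $T_{M'}$ instead surfaces at some $P^{p^a}$ either as a missing required exponent or as the raised exponent colliding with a row already present in the determinant, and in both cases the output is $0$.

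The main obstacle is carrying out the block-by-block bookkeeping cleanly: one must verify that every possible deviation of $T_{M'}$ from $T_M$ must produce a vanishing somewhere inside the polynomial part, before the trailing Bockstein cascade is even reached.
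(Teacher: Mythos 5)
Your combinatorial dictionary is sound: encoding $M_{n;r_{1},\dots ,r_{k}}L_{n}^{p-2}$ by its exponent set $T\subset \{0,\dots ,n-1\}$ and recording that $P^{p^{a}}$ either moves $a$ to $a+1$ in $T$ or annihilates the element (missing exponent, or two coinciding rows of the determinant) is exactly the mechanism underlying Lemma \ref{P(B(M))}, and it reproduces the paper's treatment of the positive statement and of those mismatches that the polynomial blocks do see.

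The gap sits precisely at the point you flag as ``the main obstacle'', and it is not a bookkeeping issue but a false claim: it is \emph{not} true that every deviation of $T_{M^{\prime }}$ from $T_{M}$ produces a vanishing inside the polynomial part. The blocks only constrain $T_{M^{\prime }}$ at the positions they actually visit, and they can be partially or entirely empty: whenever $t_{k}=l+k-1$ for all $k$ (for instance $M=M_{n;0}L_{n}^{p-2}$, where $T_{M}=\{1,\dots ,n-1\}$) the polynomial part of $P\left( B\left( M\right) \right) $ is the identity and imposes no condition on $M^{\prime }$ at all; and even nonempty blocks say nothing about elements of $T_{M^{\prime }}$ lying below $t_{1}$ or in positions the blocks skip, so one cannot conclude $T_{M^{\prime }}=T_{M}$, nor $l^{\prime }=l$. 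In such cases the vanishing has to come from the trailing cascade $\beta P^{p^{0}}\beta \cdots P^{p^{l-2}}\cdots P^{p^{0}}\beta $, e.g.\ from $\beta \left( M_{n;s}L_{n}^{p-2}\right) =-\beta ^{2}\left( M_{n;0,s}L_{n}^{p-2}\right) =0$ for $s>0$. This is exactly where the paper does extra work: its proof splits into the cases $s_{l}=n-1>s_{l}^{\prime }$, $s_{l}^{\prime }<s_{l}<n-1$, $s_{l}<s_{l}^{\prime }$ and $s_{l}=s_{l}^{\prime }$, and in the case $s_{l}<s_{l}^{\prime }$ it argues that the polynomial part carries $M^{\prime }$ (if it survives at all) to a basis element different from $M_{n;0,1,\dots ,l-1}L_{n}^{p-2}$, which the Bockstein cascade then kills. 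Your argument needs this additional analysis of the cascade (including the case $l^{\prime }\neq l$) to close; as written, the assertion that all vanishing occurs ``before the trailing Bockstein cascade is even reached'' cannot be salvaged.
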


\begin{proof}
Let $\left( t_{1},...,t_{n-l}\right) $ and $\left( t_{1}^{\prime
},...,t_{n-l}^{\prime }\right) $ \ be the corresponding supports. We recall
that $M_{n;s_{1},...,s_{l}}$ is a sum of monomials of the form $%
x_{1}...x_{l}y_{l+1}^{p^{t_{1}}}...y_{n}^{p^{t_{n-l}}}$.

Let $s_{l}=n-1>s_{l}^{\prime }$. Then $%
P^{p^{n-2}}...P^{p^{t_{n-l}}}y_{l+1}^{p^{t_{1}^{\prime
}}}...y_{n}^{p^{t_{n-l}^{\prime }}}$\ is either zero (if $t_{n-l}\neq
t_{q}^{\prime }$) or contains two identical powers $p^{m-1}$\ and in either
case the determinant $P^{p^{n-2}}...P^{p^{t_{n-l}}}M^{\prime }$\ is zero.

Let $s_{l}^{\prime }<s_{l}<n-1$ and $k$\ maximal such that $t_{k}<t_{k+1}-1$
and $k<n-l$.

If $t_{k}\notin \left( t_{1}^{\prime },...,t_{n-l}^{\prime }\right) $, then $%
P^{p^{t_{k}}}M^{\prime }=0$ (please recall the proof of the last lemma). Let 
$t_{k}\in \left( t_{1}^{\prime },...,t_{n-l}^{\prime }\right) $ and $%
t_{k}^{\prime }=t_{k}$.\ If $t_{k+1}^{\prime }=t_{k}^{\prime }+1$, then $%
P^{p^{t_{k}}}y_{l+1}^{p^{t_{1}^{\prime }}}...y_{n}^{p^{t_{n-l}^{\prime }}}$
contains two identical powers $p^{t_{k}^{\prime }+1}$\ and the determinant $%
P^{p^{t_{k}}}M^{\prime }$\ is zero.

If $t_{k+1}^{\prime }>t_{k}^{\prime }+1$, then $t_{k+1}-t_{k}>t_{k+1}^{%
\prime }-t_{k}^{\prime }$. Again for the same reason $%
P^{p^{t_{k+1}-2}}...P^{p^{t_{k}}}M^{\prime }=0$.\ 

Let $s_{l}<s_{l}^{\prime }$, then 
\begin{equation*}
P\left( B\left( M\right) \right) =(\beta P^{p^{0}}\beta
...P^{p^{l-2}}...P^{p^{0}}\beta )P^{p^{i_{q}}}...P^{p^{i_{1}}}
\end{equation*}%
\ and 
\begin{equation*}
P\left( B\left( M^{\prime }\right) \right) =(\beta P^{p^{0}}\beta
...P^{p^{l-2}}...P^{p^{0}}\beta )P^{p^{i_{q^{\prime }}^{\prime
}}}...P^{p^{i_{1}^{\prime }}}
\end{equation*}%
such that $i_{1}<i_{1}^{\prime }$ (please see lemma \ref{P(B(M))}). Now 
\begin{equation*}
P^{p^{i_{q}}}...P^{p^{i_{1}}}M=M_{n;0,1,...,l-1}L_{n}^{p-2}=P^{p^{i_{q^{%
\prime }}^{\prime }}}...P^{p^{i_{1}^{\prime }}}M^{\prime }
\end{equation*}%
If $P^{p^{i_{q}}}...P^{p^{i_{1}}}M^{\prime }\neq 0$, then $%
P^{p^{i_{q}}}...P^{p^{i_{1}}}M^{\prime }\neq M_{n;0,1,...,l-1}L_{n}^{p-2}$,
because $i_{1}<i_{1}^{\prime }$. Thus $P\left( B\left( M\right) \right)
M^{\prime }=0$.

If $s_{l}=s_{l}^{\prime }$, then by applying a suitable sequence of Steenrod
operations the case is reduced to one of the previous ones.\ 
\end{proof}

\begin{corollary}
\label{Steenrod action exterior}Let $M=M_{n;s_{1},...,s_{l}}L_{n}^{p-2}d^{K}$
be a monomial in $D_{n}$, then $P\left( B\left( M\right) \right)
M=ud_{n,n}d^{K}$. Let $M^{\prime }=M_{n;s_{1}^{\prime },...,s_{l^{\prime
}}^{\prime }}L_{n}^{p-2}d^{K^{\prime }}$ such that $s_{t}\neq s_{t}^{\prime
} $ for some $t$, then $P\left( B\left( M\right) \right) M^{\prime }=0$.
\end{corollary}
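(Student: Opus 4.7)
The plan is to reduce both parts of the corollary to the preceding proposition by establishing a single \emph{pass-through identity}:
\[ P(B(M))\bigl(M_{n;s_1,\ldots,s_l}L_n^{p-2}\cdot d^K\bigr) = \bigl(P(B(M))M_{n;s_1,\ldots,s_l}L_n^{p-2}\bigr)\cdot d^K \]
for every multi-index $K$. Granting it, Lemma \ref{P(B(M))} immediately gives the first claim $P(B(M))M = u\,d_{n,n}\,d^K$. Applying the same identity to $M'$ together with the previous proposition yields
\[ P(B(M))M' = \bigl(P(B(M))M_{n;s_1',\ldots,s_{l'}'}L_n^{p-2}\bigr)\cdot d^{K'} = 0\cdot d^{K'}=0, \]
which is exactly the second claim.

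To establish the pass-through identity I would expand $P(B(M))$ operation by operation via the Cartan formula and prove, by induction on the remaining length of the string, that every cross-term that places a non-trivial operation on the Dickson polynomial factor either vanishes at once or is killed by a later operation in $P(B(M))$. Two structural facts drive this. First, $\beta(d^K)=0$, because $d^K$ lies in $P[y_1,\ldots,y_n]^{GL_n}$ on which the Bockstein is trivial; so every cross-term that routes one of the many Bocksteins of $P(B(M))$ onto $d^K$ dies immediately. Second, Proposition \ref{actionp-th} restricts $P^{p^j}$ to act non-trivially on $d_{n,i}^{p^k}$ only when $j = n+k-i-1$ or $j = k+n-1$; since every power $p^j$ appearing in $P(B(M))$ has $j \leq n-2$, these windows are very narrow, and Lemma \ref{SteenActionLemma3} shows that any such perturbation of $d^K$ is annihilated by a subsequent operation of $P(B(M))$ (producing, as in the proof of the preceding proposition, a determinant with two identical powers).

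The main obstacle I expect is the combinatorial bookkeeping: verifying that none of the many Cartan redistributions across the full string of operations in $P(B(M))$ slips through. The argument will parallel the case analysis carried out in the previous proposition, with $d^K$ playing a passive role, since a non-trivial action on $d^K$ merely shifts exponents among Dickson generators but cannot supply the exterior factor $M_{n;0,1,\ldots,l-1}L_n^{p-2}$ needed to survive the terminal block $\beta P^{p^0}\beta\cdots P^{p^{l-2}}\cdots P^{p^0}\beta$ in $P(B(M))$. Once this vanishing is established, the corollary follows directly.
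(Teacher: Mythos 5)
Your proposal is correct and follows essentially the same route as the paper: the paper's own (very terse) proof likewise writes $P(B(M))M = ud_{n,n}d^{K} + (\text{others})$ via the Cartan formula and Lemma \ref{P(B(M))}, and kills the cross-terms and the second statement by appealing to the uniqueness proposition preceding the corollary. Your ``pass-through identity'' is just a repackaging of the claim $(\text{others})=0$, and your closing justification --- that a Cartan cross-term leaves the exterior factor unable to reach $M_{n;0,1,\ldots,l-1}L_n^{p-2}$ and hence is annihilated by the terminal block --- is the correct mechanism (more so than the earlier appeal to Lemma \ref{SteenActionLemma3} and ``identical powers,'' which concern the exterior factor rather than $d^{K}$).
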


\begin{proof}
For the first statement, last lemma implies that 
\begin{equation*}
P\left( B\left( M\right) \right) M=ud_{n,n}d^{K}+\left( others\right) .
\end{equation*}%
Last proposition implies that $\left( others\right) =0$. The second
statement is an application of last proposition. \ \ 
\end{proof}

Now we are ready to proceed to our main results of this section.

\begin{theorem}
\label{Th4}The extended Dickson algebra $D_{n}$ is atomic.
\end{theorem}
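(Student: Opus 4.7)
The plan is to combine the two detection tools built up above — Corollary \ref{Steenrod action exterior}, which normalises the exterior factor of a Mui basis element, and Theorem \ref{power of d-n,n}, which normalises the polynomial factor — to mimic the argument of Corollary \ref{Th2}.

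When $p=2$ the extended Dickson algebra reduces to the classical one and the statement is Corollary \ref{Th2}, so I would assume $p$ odd henceforth. A direct comparison using Mui's degree formula shows that the lowest positive-degree component of $D_n$ is one-dimensional, spanned by $\xi = M_{n;0,1,\ldots,n-1}L_n^{p-2}$ (the $m=n$ choice beats every other $m$ and beats $|d_{n,1}|$). The hypothesis then gives $f(\xi) = \lambda\xi$ with $\lambda \in (\mathbb{Z}/p)^{\times}$. Applying $P(B(\xi))$ from Lemma \ref{P(B(M))} and using $\mathcal{A}$-linearity,
\begin{equation*}
u\,f(d_{n,n}) \;=\; f\bigl(P(B(\xi))\,\xi\bigr) \;=\; P(B(\xi))\,f(\xi) \;=\; \lambda u\,d_{n,n},
\end{equation*}
so $f(d_{n,n})=\lambda\,d_{n,n}$. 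Since every $d_{n,n}^{p^{m}}$ is an iterated Steenrod image of $d_{n,n}$, $\mathcal{A}$-linearity propagates this to $f(d_{n,n}^{p^{m}}) = \lambda\,d_{n,n}^{p^{m}}$ for every $m\geq 0$.

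For injectivity, I would suppose $f(z)=0$ for some nonzero $z$ and expand $z = \sum_{\alpha} c_{\alpha} E_{\alpha}\,d^{K_{\alpha}}$ in Mui's basis, with each $E_{\alpha}$ either $1$ or $M_{n;S_{\alpha}}L_n^{p-2}$. Fix an $E_0$ occurring in this expansion. If $E_0 \neq 1$, Corollary \ref{Steenrod action exterior} together with $\beta\,d^{K}=0$ gives
\begin{equation*}
P(B(E_0))\,z \;=\; u\,d_{n,n}\!\!\sum_{\alpha:\,E_{\alpha}=E_0}\!\!c_{\alpha}\,d^{K_{\alpha}},
\end{equation*}
a nonzero element of the classical Dickson subalgebra. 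Running the distinguishing argument inside the proof of Corollary \ref{Th2} on this element then supplies a sequence $P(\Gamma,K)$ sending it to $u''\,d_{n,n}^{p^{m}}$. (When $E_0 = 1$, the same argument is applied directly to the pure polynomial part of $z$.) Applying $f$, using $\mathcal{A}$-linearity, and invoking $f(d_{n,n}^{p^m}) = \lambda\,d_{n,n}^{p^m}$,
\begin{equation*}
0 \;=\; f\bigl((P(\Gamma,K)\circ P(B(E_0)))\,z\bigr) \;=\; u''\lambda\,d_{n,n}^{p^{m}} \;\neq\; 0,
\end{equation*}
a contradiction. Thus $f$ is injective, and since each graded component of $D_n$ is finite-dimensional over $\mathbb{Z}/p$, $f$ is an isomorphism.

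The main obstacle is the distinguishing step inside the injectivity argument: after applying $P(B(E_0))$, verifying that a single $P(\Gamma,K)$ can pick out a specific summand of $\sum_{\alpha:\,E_{\alpha}=E_0} c_{\alpha}d^{K_{\alpha}}$ while annihilating all competitors of the same degree. This reduces to the $p$-adic case analysis already performed in Corollary \ref{Th2}, now applied to the post-normalised polynomial $d_{n,n}\sum_{\alpha} c_{\alpha} d^{K_{\alpha}}$.
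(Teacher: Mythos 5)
Your proposal is correct and follows essentially the same route as the paper: the paper's own (much terser) proof also starts from the bottom class $M=\prod_{1}^{n}x_{i}L_{n}^{p-2}$ and cites exactly Corollary \ref{Steenrod action exterior} (to isolate the exterior factor via $P(B(M))$) together with Corollary \ref{Th2} (to handle the resulting polynomial part). Your write-up merely supplies the details --- identifying the one-dimensional bottom degree, propagating $f(d_{n,n})=\lambda d_{n,n}$, and running the detection argument on a general element --- that the paper leaves implicit.
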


\begin{proof}
Let $g:D_{n}\rightarrow D_{n}$ be an $\mathcal{A}$-linear map such that $%
g(M)\neq 0$. We will prove that $g$ is an isomorphism. Here $%
M=\prod\limits_{1}^{n}x_{i}L_{n}^{p-2}$. Applying corollaries \ref{Steenrod
action exterior}\ and \ref{Th2},\ the claim is obtained.
\end{proof}

\begin{proposition}
\label{SD_k atomic}a) $SD_{n}$\ is atomic as a Steenrod algebra module.\ \ \ 

b) $I_{n}$ is atomic as a Steenrod algebra module.
\end{proposition}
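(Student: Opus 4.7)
The plan is to adapt the arguments of Corollary \ref{Th2} and Theorem \ref{Th4} to the $\mathcal{A}$-submodules $SD_n$ and $I_n$. The preliminary observation needed for both parts is that each of these is closed under the Steenrod algebra action: this is explicit for $I_n$ in the remark preceding Lemma \ref{Bhta apo 0}, and for $SD_n$ it follows directly from Proposition \ref{actionp-th}, Lemma \ref{Bhta apo 0}, and the Cartan formula applied to the three classes of generators $d_{n,i}$, $M_{n;s}L_n^{p-2}$, $M_{n;s_1,s_2}L_n^{p-2}$. Hence the Steenrod chains used in Sections 2 and 3 restrict to these submodules without leaving them.

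For part (a), let $f\colon SD_n\to SD_n$ be degree-preserving, $\mathcal{A}$-linear, and an isomorphism on the lowest positive degree. The degree formulas show that for $n\geq 2$ this lowest degree is occupied (up to scalar) by $M_{n;n-2,n-1}L_n^{p-2}$ alone, so $f$ scales it by a unit $\lambda$. I would then propagate $\lambda$ through every generator of $SD_n$ using the Steenrod chains $\beta P(i-1,i)M_{n;i,s}L_n^{p-2}=-M_{n;s}L_n^{p-2}$, $P^{p^{s-1}}M_{n;s}L_n^{p-2}=M_{n;s-1}L_n^{p-2}$, and $\beta M_{n;0}L_n^{p-2}=d_{n,n}$ from Lemma \ref{Bhta apo 0} and the proposition preceding it. The delicate step is pinning down $f$ on the low polynomial generator $d_{n,1}$, since Proposition \ref{actionp-th}(1) only raises the index of $d_{n,i}$: here one exploits that $d_{n,1}$ is alone in its internal degree to write $f(d_{n,1})=\mu d_{n,1}$, then iterates $P^{p^k}$ to get $f(d_{n,n})=\mu d_{n,n}$, and comparison with the value of $f(d_{n,n})$ obtained from the $\beta M_{n;0}L_n^{p-2}$ chain yields $\mu=\lambda\neq 0$. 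With $f$ acting by the unit $\lambda$ on every generator, the injectivity argument of Corollary \ref{Th2} carries over: for any monomial $N\in SD_n$ with $f(N)=0$, Corollary \ref{Steenrod action exterior} and Theorem \ref{power of d-n,n} transport $N$ to a nonzero scalar multiple of a power of $d_{n,n}$ while killing every competing monomial of the same degree, and sums of monomials are handled by the lexicographic extremality trick from the proof of Corollary \ref{Th2}.

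Part (b) for $I_n$ follows the same template. The lowest-degree ideal generator is $M_{n;0,n-1}L_n^{p-2}$, and the relations $\beta M_{n;0,i}L_n^{p-2}=-M_{n;i}L_n^{p-2}$, $P^{p^{s-1}}M_{n;s}L_n^{p-2}=M_{n;s-1}L_n^{p-2}$, and $\beta M_{n;0}L_n^{p-2}=d_{n,n}$ link all three families of ideal generators inside $I_n$; no exceptional cross-consistency step is required here because $I_n$ contains no bottom polynomial generator such as $d_{n,1}$. The main obstacle in both parts is the propagation step, and especially the $\mu=\lambda$ matching in (a), where one must genuinely use the multiplicative/Cartan structure of the submodule to reach a low polynomial generator from a lower-dimensional exterior one; once that matching is verified, the separation property from the proposition preceding Corollary \ref{Steenrod action exterior} closes the injectivity argument exactly as for $D_n$.
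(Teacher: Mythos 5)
Your proposal follows essentially the same route as the paper: the paper's proof simply fixes the lowest-degree classes $M_{n;n-2,n-1}L_n^{p-2}$ (for $SD_n$) and $M_{n;0,n-1}L_n^{p-2}$ (for $I_n$) and invokes Corollaries \ref{Steenrod action exterior} and \ref{Th2}, exactly the machinery you propagate through the generators via Lemma \ref{Bhta apo 0}. The additional care you take over the cross-consistency at $d_{n,1}$ is a detail the paper's two-line proof leaves implicit, not a departure from its method.
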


\begin{proof}
a) Let $f:SD_{n}\rightarrow SD_{n}$ satisfy 
\begin{equation*}
f(M_{n;n-2,n-1}L_{n}^{p-2})=uM_{n;n-2,n-1}L_{n}^{p-2}\text{.}
\end{equation*}%
Applying corollaries \ref{Steenrod action exterior}\ and \ref{Th2},\ the
claim is obtained.

b) Let us recall that $I_{n}$ is the ideal generated by 
\begin{equation*}
\{d_{n,n},M_{n;s_{1}}L_{n}^{p-2},M_{n;0,s_{1}^{\prime }}L_{n}^{p-2}\}.
\end{equation*}%
Here $0\leq s_{1}<n$ and $0<s_{1}^{\prime }\leq n-1$. Let $f$ satisfy 
\begin{equation*}
f(M_{n;0,n-1}L_{n}^{p-2})=uM_{n;0,n-1}L_{n}^{p-2}.
\end{equation*}%
The proof follows the same pattern as above.
\end{proof}

Proposition b) above is a reformulation of Theorem 4.1 in \cite{C-P-S1}.

We close this section by observing a property of the Steenrod algebra. We
recall that an $\mathcal{A}$-module is indecomposable, if it is not a
non-trivial direct sum.

Let $\overline{D_{n}}$\ denote the augmentation ideal of $D_{n}$.

\begin{corollary}
\label{decomposit}$\overline{D_{n}}$ is not directly decomposable as an \ $%
\mathcal{A}$-module.\ 
\end{corollary}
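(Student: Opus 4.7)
The plan is to deduce this corollary directly from the atomicity of $D_n$ (Theorem \ref{Th4}). Suppose, for contradiction, that $\overline{D_n} = A \oplus B$ as $\mathcal{A}$-modules with both $A$ and $B$ nonzero. I will produce a contradiction by showing that atomicity forces one of the summands to vanish.

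First I extend the decomposition to all of $D_n$. The degree-zero line $\mathbb{Z}/p\cdot 1 \subset D_n$ is itself an $\mathcal{A}$-submodule, since $P^0$ fixes $1$ while $\beta$ and every $P^i$ with $i>0$ annihilate it. Hence $D_n = \mathbb{Z}/p\cdot 1 \oplus A \oplus B$ as $\mathcal{A}$-modules, and there is a well-defined $\mathcal{A}$-linear projection $\pi_A : D_n \to D_n$ of degree $0$ with image $\mathbb{Z}/p\cdot 1 \oplus A$ and kernel $B$; define $\pi_B$ analogously.

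Next I apply Theorem \ref{Th4} through the distinguished element $M = \prod_{i=1}^{n} x_i L_n^{p-2}$ that appears in its proof. Write $M = M_A + M_B$ with $M_A \in A$ and $M_B \in B$. Since $M \neq 0$, at least one of these components is nonzero; without loss of generality $M_A \neq 0$, so that $\pi_A(M) = M_A \neq 0$. Theorem \ref{Th4} then forces $\pi_A$ to be an isomorphism, whence $B = \ker \pi_A = 0$, contradicting the assumption that $B$ is nonzero.

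There is no serious obstacle once Theorem \ref{Th4} is in hand; the whole argument rests on two observations, namely that $D_n$ splits $\mathcal{A}$-linearly off its constants, so that any decomposition of $\overline{D_n}$ lifts to one of $D_n$, and that the atomicity-witnessing element $M$ must have a nontrivial component in at least one of the summands, which is precisely where Theorem \ref{Th4} can be invoked.
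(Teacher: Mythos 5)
Your argument is correct, but it is not the route the paper takes. You deduce indecomposability formally from atomicity (Theorem \ref{Th4}): you split off the constants, form the $\mathcal{A}$-linear idempotent projection $\pi_A$ along $B$, observe that $\pi_A(M)\neq 0$ for the bottom class $M=\prod_1^n x_i L_n^{p-2}$ (which is exactly the hypothesis used in the paper's proof of Theorem \ref{Th4}), and conclude that $\pi_A$ is an isomorphism, killing $B$. This is the standard ``atomic implies indecomposable'' projection argument, and it is clean and complete; the only points worth making explicit are that the decomposition is understood to be by graded $\mathcal{A}$-submodules (so that $\pi_A$ is degree preserving, an assumption the paper also makes tacitly) and that a decomposition with more than two summands reduces to the two-summand case by grouping. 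The paper instead argues directly from Theorem \ref{power of d-n,n}: if $d(i)$ and $d(j)$ are nonzero homogeneous elements of two distinct summands, each can be driven by a sequence of Steenrod operations onto a unit multiple of $d_{n,n}^{p^l}$, and since each summand is closed under the $\mathcal{A}$-action this places $d_{n,n}^{p^l}$ in both summands at once, contradicting directness. The trade-off is that the paper's argument needs only the ``every monomial hits a power of $d_{n,n}$'' property and exhibits a concrete common element in two summands, whereas yours invokes the full strength of Theorem \ref{Th4} (including the analysis of the exterior part) but requires no further computation with Steenrod operations.
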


\begin{proof}
Assume $\overline{D_{n}}=\bigoplus\limits_{i\in I}(D_{n})_{i}$ such that $%
(D_{n})_{i}\neq 0$. If $d\left( i\right) $\ and $d\left( j\right) $\ are
homogeneous polynomials in $(D_{n})_{i}$\ and $(D_{n})_{j}$\ respectively,
then there exist $P^{\Gamma }$\ and $P^{\Gamma ^{\prime }}$\ such that $%
a_{i}P^{\Gamma }d\left( i\right) =d_{n,n}^{p^{l}}=b_{j}P^{\Gamma }d\left(
j\right) $. \ 
\end{proof}

\section{$Q_{0}S^{0}$ is $H$-atomic at $p$}

We close this work by applying the main result in the $mod-p$ homology
of $QS^{0}$. $H_{\ast }(Q_{0}S^{0};%
\Bbb{Z}/\Bbb{Z}p%
)$ is described in terms of Dyer-Lashof operations. For their properties
please see May \cite{CLM}.\ 

Iterates of the Dyer-Lashof operations are of the form $Q^{\left(
I,\varepsilon \right) }=\beta ^{\epsilon _{1}}Q^{i_{1}}\dots \beta
^{\epsilon _{k}}Q^{i_{k}}$ where $(I,\varepsilon )=((i_{k},\dots
,i_{1}),(\epsilon _{k},...,\epsilon _{1}))$ with $\epsilon _{j}=0\ $or$\ 1$
and $i_{j}$ a non-negative integer for $j=1,\ \dots \ ,k$. If $p=2$, $%
\epsilon _{j}=0\ $for all $j$. The degree is defined by $|\left(
I,\varepsilon \right) |:=|Q^{\left( I,\varepsilon \right) }|=2(p-1)\left(
\sum\limits_{t=1}^{k}i_{t}\right) -\left(
\sum\limits_{t=1}^{k}e_{t}\right) $ [$|Q^{I,\varepsilon }|=\left(
\sum\limits_{t=1}^{k}i_{t}\right) $, for $p=2$]. Let $l(I,\varepsilon )=k$
denote the length of $\left( I,\varepsilon \right) $ or $Q^{I,\varepsilon }$
and let the excess of $(I,\varepsilon )$ or $Q^{\left( I,\varepsilon \right)
}$, denoted by $exc(Q^{\left( I,\varepsilon \right) })=i_{k}-\epsilon
_{k}-|Q^{(I\left( k-1\right) ,\varepsilon \left( k-1\right) )}|$, where $%
(I\left( t\right) ,\varepsilon \left( t\right) )=((i_{t},\dots
,i_{1}),(\epsilon _{t},\dots ,\epsilon _{1}))$. 
\begin{equation*}
exc(Q^{I,\varepsilon })=i_{k}-\epsilon _{k}-2(p-1)\sum\limits_{1}^{k-1}i_{t}%
\text{, [}exc(Q^{I})=i_{k}-\sum\limits_{1}^{k-1}i_{t}\text{].}
\end{equation*}%
The excess is defined $\infty $, if $I=\emptyset $ and we omit the sequence $%
(\epsilon _{1},...,\epsilon _{k})$, if all $\epsilon _{i}=0$. We refer to
elements $Q^{I}$ as having non-negative excess, if $exc(Q^{(I\left( t\right)
,\varepsilon \left( t\right) )})$ is non-negative for all $t$.

There are relations among the iterated operations called Adem relations, so
an operation can be reduced to a sum of \textit{admissible} operations after
applying Adem relations.\ A sequence $(I,\varepsilon )$ is called
admissible, if $pi_{j}-\epsilon _{j}\geq i_{j-1}$ ($2i_{j}\geq i_{j-1}$) for 
$2\leq j\leq k$.

The Kronecker pairing and the left $\mathcal{A}$-module on $H^{\ast
}(Q_{0}S^{0};%
\Bbb{Z}/\Bbb{Z}p%
)$ induces a right $\mathcal{A}_{\ast }$-module\ structure on $H_{\ast
}(Q_{0}S^{0};%
\Bbb{Z}/\Bbb{Z}p%
)$. We follow Cohen and May in writing the Steenrod operations on the left.
\ 
 
The Dyer-Lashof algebra can be decomposed as opposite Steenrod coalgebras
with respect to length 
\begin{equation*}
R=\bigoplus\limits_{k\geq 0}R[k]\text{.}
\end{equation*}

Let $R^{+}$ be the positive degree elements of $R$ and $R_{0}$ be the ideal
generated by positive degree elements of excess zero 
\begin{equation*}
R_{0}=<Q^{\left( I,\varepsilon \right) }\ |\ exc(I,\varepsilon )=0>\text{.}
\end{equation*}

Let $(I,\varepsilon )$ be an admissible sequence such that $|Q^{\left(
I,\varepsilon \right) }|>0$, then $Q^{\left( I,\varepsilon \right) }[1]$
corresponds to 
\begin{equation*}
\lbrack 1]_{\left( I,\varepsilon \right) }:=Q^{\left( I,\varepsilon \right)
}[1]\ast <Q^{\left( I,\varepsilon \right) }[1]>^{-1}\in H_{\ast }(Q_{0}S^{0})%
\text{.}
\end{equation*}%
Here $<Q^{\left( I,\varepsilon \right) }[1]>^{-1}=[-p^{l(I)}]$.

According to Madsen and May, $H_{\ast }(Q_{0}S^{0};%
\Bbb{Z}/\Bbb{Z}p%
)$ is the free commutative graded algebra generated by $Q^{(I,\varepsilon
)}[1]\ast \lbrack -p^{l(I)}]$. Here $(I,\varepsilon )$ are admissible
sequences of positive excess and $\ast $ denotes Pontryagin multiplication.
There exists an $\mathcal{A}_{\ast }$-module isomorphism between the
generators of $H_{\ast }(Q_{0}S^{0};%
\Bbb{Z}/\Bbb{Z}p%
)$ and the quotient $R/Q_{0}R$ where $Q_{0}R=\{Q^{(I,\varepsilon
)}|exc(I,\varepsilon )=0\}$. It is known that $R[k]^{\ast }\cong SD_{k}$ as
Steenrod algebras (\cite{Mui 2}, \cite{Kech3}) and $(R/Q_{0}R)[k]^{\ast
}\cong I_{k}$ as Steenrod modules (\cite{C-P-S1}).

Next Theorem has been given in \cite{C-C-P-S1} as Theorem 2.3 for $p=2$ by a
similar method.

\begin{theorem}
\label{isomorphism in loops}\textit{Let }$f:Q_{0}S^{0}\rightarrow Q_{0}S^{0}$
be an $H$-map which induces an isomorphism on $H_{2p-3}(Q_{0}S^{0};%
\Bbb{Z}/\Bbb{Z}p%
)$. Let $p>2$ and 
\begin{equation*}
f_{\ast }(Q^{\left( p,1\right) }[1])=uQ^{\left( p,1\right) }[1]+others
\end{equation*}%
for some $u\in (%
\Bbb{Z}/\Bbb{Z}p%
)^{\ast }$\textit{. } Then $f_{\ast }$ is an isomorphism.
\end{theorem}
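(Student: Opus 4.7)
Since $f$ is an $H$-map, the dual $f^{\ast}$ is an $\mathcal{A}$-algebra endomorphism of $H^{\ast}(Q_0S^0;\mathbb{Z}/p)$. By the Madsen--May description recalled just before the theorem, $H^{\ast}(Q_0S^0;\mathbb{Z}/p)$ is a free graded-commutative algebra on the duals of the classes $Q^{(I,\varepsilon)}[1]\ast[-p^{l(I)}]$, so $f^{\ast}$ is an isomorphism iff its induced map on indecomposables $QH^{\ast}$ is. Using the identifications $R[k]^{\ast}\cong SD_k$ and $(R/Q_0R)[k]^{\ast}\cong I_k$ stated in the excerpt, the indecomposable module splits by length as $QH^{\ast}(Q_0S^0;\mathbb{Z}/p)=\bigoplus_{k\ge 1}I_k$ as $\mathcal{A}$-modules; moreover the length filtration is preserved by any Hopf-algebra map, so $f^{\ast}$ respects this decomposition. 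It therefore suffices to prove that $f^{\ast}|_{I_k}$ is an isomorphism for every $k\ge 1$.

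The base case $k=1$ is handled by the first hypothesis. The lowest-degree nonzero part of $I_1\subset SD_1$ sits in degree $2p-3$ and is generated by $M_{1;0}L_1^{p-2}$, which is dual to $\beta Q^1[1]\in H_{2p-3}(Q_0S^0;\mathbb{Z}/p)$. The assumption that $f_{\ast}$ is an isomorphism on $H_{2p-3}$ therefore forces $f^{\ast}$ to be nonzero on this bottom class, and atomicity of $I_1$ (Proposition \ref{SD_k atomic}(b)) upgrades this to $f^{\ast}|_{I_1}$ being an isomorphism. In particular $f^{\ast}$ is already an isomorphism on $M_{1;0}L_1^{p-2}\cdot d_{1,1}^{p-1}$ in degree $2p^2-2p-1$, which is dual to the length-1 class $Q^{(p,1)}[1]$; this is consistent with and determines the unit $u$ appearing in the second hypothesis.

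For $k\ge 2$, the bottom of $I_k$ is $M_{k;0,k-1}L_k^{p-2}$ in degree $2p^k-2p^{k-1}-2$, and by Proposition \ref{SD_k atomic}(b) it suffices to show $f^{\ast}$ is an isomorphism on this single class. I would propagate from length $1$ using the coalgebra structure on $R/Q_0R$: dually, the multiplications $I_i\otimes I_j\to I_{i+j}$ are compatible with $f^{\ast}$ because $f^{\ast}$ is a Hopf-algebra map, and the bottom class of $I_k$ can be expressed (up to a unit) as a Steenrod-iterated product built from the length-1 classes $M_{1;0}L_1^{p-2}$ and $M_{1;0}L_1^{p-2}\cdot d_{1,1}^{p-1}$, i.e.\ from the two hypotheses. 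The second hypothesis, combined with $f^{\ast}|_{I_1}$ being an isomorphism, forces $f^{\ast}$ to be nonzero on this product-representation of the bottom of $I_k$, and then atomicity completes the induction. Assembling isos on all $I_k$ gives iso on $QH^{\ast}$, whence on $H^{\ast}$, and dually $f_{\ast}$ is an isomorphism.

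The main obstacle is exactly the propagation step for $k\ge 2$: one must present the bottom class $M_{k;0,k-1}L_k^{p-2}$ as an explicit nonzero product (under the dual of the Dyer--Lashof coproduct) of length-1 classes that the hypotheses control, and verify that the Hopf-algebra compatibility transports the length-1 isomorphism to the length-$k$ bottom without introducing indeterminate coefficients. For $p=2$ this collapses because there is no Bockstein, so the analogous argument of \cite{C-C-P-S1} needs only a single hypothesis; for odd $p$ the class $Q^{(p,1)}[1]=\beta Q^p[1]$ supplies the extra Bockstein-related length-1 datum that keeps the propagation nondegenerate at every length, which is precisely why the second hypothesis is imposed.
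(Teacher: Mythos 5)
Your reduction is the same as the paper's: dualize, split the indecomposables by length into $\bigoplus_k I_k$, and invoke the atomicity of $I_k$ (Proposition \ref{SD_k atomic}) so that it suffices to control $\overline{f}_k$ on a single distinguished class in each length. The base case via $H_{2p-3}$ is also as in the paper. But the inductive propagation from length $1$ to length $k$ --- which is the entire content of the proof --- is exactly the step you flag as ``the main obstacle'' and leave unexecuted, and your proposed route (write the bottom exterior class $M_{k;0,k-1}L_k^{p-2}$ as a Steenrod-iterated product of length-one classes under the dual of the Dyer--Lashof coproduct) is not obviously realizable and is not what makes the argument work. The paper's mechanism is concrete and different in its details: since $f$ is an $H$-map, $f_\ast$ commutes with Pontryagin products, hence with $p$-th powers; the relation $x^p=Q^{|x|/2}x$ converts the $p$-th power of $Q^{\left(p^{k-1},\dots ,p,1\right)}[1]$ into the excess-zero class $Q^{\left(p^{k}-1,p^{k-1},\dots ,p,1\right)}[1]$; one application of the dual Steenrod operation $P^{1}_{\ast}$ then produces $Q^{\left(p^{k},p^{k-1},\dots ,p,1\right)}[1]$ \emph{uniquely} by the Nishida relations (this uniqueness holds only for $k>2$); dualizing gives $\overline{f}_{k+1}(d_{k+1,k+1})=ud_{k+1,k+1}+\text{others}$, and atomicity closes the induction. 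So the class that is propagated is $d_{k,k}$, not the exterior bottom class, and the propagation uses the $H$-structure plus one Nishida computation per length, not a product decomposition of $M_{k;0,k-1}L_k^{p-2}$.

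A secondary but real error: you identify $Q^{\left(p,1\right)}[1]$ with $\beta Q^{p}[1]$, a length-one class. It is in fact the length-two admissible class $Q^{p}Q^{1}[1]$, corresponding dually to $d_{2,2}$. The second hypothesis is imposed precisely because the Nishida-relation step above breaks down at length two (the paper's ``Since $k>2$\dots uniquely''), so the length-two input cannot be derived from the length-one data and must be assumed; it is not an ``extra Bockstein-related length-one datum.'' Your claim that $f^{\ast}|_{I_1}$ being an isomorphism already ``determines the unit $u$'' in the second hypothesis is therefore also unjustified --- if it did, the hypothesis would be redundant.
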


\begin{proof}
The case $p>2$\ shall be considered. We shall prove that $f_{\ast }$\ is an
isomorphism on\ $Q\left( H_{\ast }(Q_{0}S^{0};%
\Bbb{Z}/\Bbb{Z}p%
)\right) $, the module of indecomposable elements. There is an $\mathcal{A}%
_{\ast }$\ module isomorphism between the previous module and $R/Q_{0}R$.
The last isomorphism provides an $\mathcal{A}$-isomorphism between $\left(
R/Q_{0}R\right) ^{\ast }$\ and $I=\oplus I_{k}$. Let $\overline{f}_{k}$\ be
the induced map of $f^{\ast }$ in $I_{k}$. It suffices to prove that $%
\overline{f}_{k}$ is an isomorphism for each $k$ and this is true as long as 
$\overline{f}_{k}(d)=ud$\ for $d=M_{k;0,s}L_{k}^{p-2}$\ or $d_{k,k}$ and $%
0<s\leq k-1$ according to proposition \ref{SD_k atomic}. Here $u$\ is a unit.

Given $f_{\ast }\left( \beta Q^{1}[1]\right) =u\beta Q^{1}[1]$\ we have 
\begin{equation*}
\beta f_{\ast }\left( Q^{1}[1]\right) =f_{\ast }\left( \beta Q^{1}[1]\right)
=u\beta Q^{1}[1]\text{.}
\end{equation*}%
Thus $f_{\ast }\left( Q^{1}[1]\right) =uQ^{1}[1]$, for degree reasons.
Moreover, 
\begin{equation*}
f_{\ast }\left( Q^{1}[1]\right) ^{p^{m}}=u\left( Q^{1}[1]\right)
^{p^{m}}\approx uQ^{\left( p^{m-1}-p^{m-2},...,p-1,1\right) }[1].
\end{equation*}

Dually (\cite{Kech2}), $\overline{f}_{1}\left( d_{1,1}\right) =ud_{1,1}$.
Given $f_{\ast }\left( Q^{(p,1)}[1]\right) =u^{\prime }Q^{(p,1)}[1]+others$,
we have $\overline{f}_{2}\left( d_{2,2}\right) =u^{\prime }d_{2,2}+others$.
Induction on the length $k$ is applied. Suppose that 
\begin{equation*}
f_{\ast }\left( Q^{\left( p^{k-1},...,p,1\right) }[1]\right) =uQ^{\left(
p^{k-1},...,p,1\right) }[1]+others.
\end{equation*}%
Now, $f_{\ast }\left( Q^{\left( p^{k-1},...,p,1\right) }[1]\right)
^{p}=u\left( Q^{\left( p^{k-1},...,p,1\right) }[1]\right) ^{p}+others$. And 
\begin{equation*}
f_{\ast }\left( Q^{\left( p^{k}-1,p^{k-1}...,p,1\right) }[1]\right)
=uQ^{\left( p^{k}-1,p^{k-1}...,p,1\right) }[1]+others.
\end{equation*}%
Since $k>2$, $P_{\ast }^{1}\left( Q^{\left( p^{k}-1,p^{k-1}...,p,1\right)
}[1]\right) =Q^{\left( p^{k},p^{k-1}...,p,1\right) }[1]$ uniquely by Nishida
relations. Thus 
\begin{equation*}
f_{\ast }\left( Q^{\left( p^{k},p^{k-1}...,p,1\right) }[1]\right)
=uQ^{\left( p^{k},p^{k-1}...,p,1\right) }[1]+others.
\end{equation*}%
Dually (\cite{Kech2}), $\overline{f}_{k+1}\left( d_{k+1,k+1}\right)
=u^{\prime }d_{k+1,k+1}+others$. Proposition \ref{SD_k atomic} implies that $%
\overline{f}_{k+1}$ is an isomorphism for all $k$. Now the Theorem follows.
\ 
\end{proof}

\subsection{Acknowledgement}

We express our profound thanks to H.-W. Henn, P. May and L. Schwartz.

\end{document}